\newcommand\diff{\,\mathrm{d}}
\newcommand\norm[1]{\left\|#1\right\|}
\newcommand\abs[1]{\lvert#1\rvert}
\newcommand{\forma}{a(\, \cdot \, , \, \cdot \,)}
\newcommand{\dualforma}{a^\ast(\, \cdot \, , \, \cdot \,)}
\newtheorem{theorem}{Theorem}[section]
\theoremstyle{definition}
\theoremstyle{remark}
\newtheorem*{remark}{Remark}
\DeclareMathOperator{\spa}{span}
\providecommand{\keywords}[1]{{\small{\bf{Keywords~}} #1}}
\title{Online basis construction for goal-oriented adaptivity in the Generalized
       Multiscale Finite Element Method}
\author{Eric Chung\thanks{Department of Mathematics, The Chinese University of Hong Kong, Shatin, Hong Kong}\ , Sara Pollock\thanks{Department of Mathematics, University of Florida, Gainesville, FL 32611, United States}\ , Sai-Mang Pun\thanks{Department of Mathematics, The Chinese University of Hong Kong, Shatin, Hong Kong}}
\begin{document}
\maketitle
%% -- ---------------------------------------------------------------------
%% -- ---------------------------------------------------------------------
\begin{abstract}
In this research, we develop an online enrichment framework
for goal-oriented adaptivity within the generalized multiscale finite element method for 
flow problems in heterogeneous media. 
The method for approximating the quantity of interest involves 
construction of residual-based primal and dual basis functions used to 
enrich the multiscale space at each stage of the adaptive algorithm.
Three different online enrichment strategies based on the primal-dual online basis 
construction are proposed: standard, primal-dual combined and primal-dual product based. 
Numerical experiments are performed to illustrate the efficiency of the proposed methods
for high-contrast heterogeneous problems.
\end{abstract}
%% -- ---------------------------------------------------------------------
%% -- ---------------------------------------------------------------------
\keywords{Goal-oriented adaptivity,
multiscale finite element method,
online basis construction,
flow in heterogeneous media.
}
%% -- ---------------------------------------------------------------------
%% -- ---------------------------------------------------------------------

%\tableofcontents

\section{Introduction}
In this work, we propose an efficient goal-oriented framework for 
approximating quantities of interest for flow problems posed in heterogeneous 
media. Many problems arising from engineering involve heterogeneous materials which have 
strong contrasts in their physical properties.
In general, one may model these so-called multiscale problems using partial differential 
equations (PDEs) with high-contrast valued multiscale coefficients. 
An important example is Darcy's law describing flow in porous media, 
modeled here by the boundary value problem in the computational 
domain $\Omega \subset \mathbb{R}^d$ ($d = 2,3$)
\begin{eqnarray}
 -\text{div}(\kappa(x) \nabla u)  = f(x) \quad \text{in } \Omega, \quad
 u  = 0  \quad \text{on } \partial \Omega.
\label{eqn:problem}
\end{eqnarray}
The direct simulation of multiscale PDEs with accurate resolution can be costly 
as a relatively fine mesh is required to resolve the coefficients, 
leading to a prohibitively large number of degrees of freedom (DOF), 
a high percentage of which may be extraneous. 
Recently, these computational challenges have been addressed by the development of 
efficient model reduction techniques such as numerical homogenization methods 
\cite{efendiev2004numerical,engquist2008asymptotic,niyonzima2016waveform,otero2015numerical,owhadi2015bayesian}
and multiscale methods 
\cite{castelletto2017multiscale,chen2003mixed,chung2016adaptive,hou1997multiscale,park2004multiscale,peterseim2016variational}.
These methods have been shown to reduce the computational cost of the simulation, for
instance approximating $u$ of \eqref{eqn:problem}. 
Here, we apply goal-oriented methods to further reduce the computational cost in the 
approximation of a quantity of interest. 

We consider \eqref{eqn:problem} with $f \in L^2(\Omega)$ given and for which
$\kappa(x)$ satisfies $\kappa_0 \leq \kappa(x) \leq \kappa_1$ for a.e. 
$x$ in $\Omega$ with constants $0 < \kappa_0 \ll \kappa_1.$ 
We proceed by posing \eqref{eqn:problem} in its variational form 
\begin{eqnarray}
	a(u,v) = f(v) \quad \forall v \in H_0^1(\Omega), \label{eqn:primal_fs_abstract}
\end{eqnarray}
where $a(u,v) := \int_\Omega \kappa(x) \nabla u \cdot \nabla v \diff{x}$ and 
$f(v) := \int_\Omega fv \diff{x}$.
We are interested in the case where $\kappa(x)$ is a heterogeneous coefficient with
high-contrast, and model reduction is necessary to efficiently approximate the solution. 

Next, we briefly describe the continuous Galerkin (CG) formulation of 
the generalized multiscale finite element method (GMsFEM)
\cite{chung2016adaptive,efendiev2013generalized}, a systematic approach to multiscale
model reduction.
We start with the notion of fine and coarse grids. Let $\mathcal{T}^H$ 
be a conforming partition of the computational domain $\Omega$ with mesh size $H>0$.
We refer to this partition as the coarse grid.
Subordinate to the coarse grid, we define the fine grid 
partition (with mesh size $h \ll H$), denoted by $\mathcal{T}^h$, by refining each coarse element into a connected union of fine grid blocks. We assume the above refinement is performed such that $\mathcal{T}^h$ is a conforming partition of $\Omega$. 
Let $N_c$ be the number of interior coarse grid nodes and let $\{x_i\}_{i=1}^{N_c}$ 
be the set of coarse grid nodes of the coarse mesh $\mathcal{T}^H$. 
Let $N$ be the number of elements in the coarse mesh. 
Define the coarse neighborhood of the node $x_i$ by 
$$\omega_i := \bigcup \{ K_j \in \mathcal{T}^H: x_i \in \overline{K}_j \},$$
that is, the union of all coarse elements which have the node $x_i$ as a vertex.

Let the fine-scale finite element space $V$ be the conforming piecewise 
linear finite element space corresponding to the fine grid $\mathcal{T}^h$ and 
let $u \in V$ be the fine-scale solution satisfying the variational problem
\begin{eqnarray}
a(u,v) = f(v) \quad \forall v \in V \label{eqn:primal_fs}.
\end{eqnarray}
Define the energy norm on $V$ by $\norm{u}_V^2 := a(u,u)$.

For each coarse node $x_i$, we construct a so-called {\it offline} set of basis functions 
supported on the neighborhood $\omega_i$.
These pre-computed multiscale basis functions are obtained from a 
local snapshot space and a local spectral decomposition defined on that snapshot space. 
The snapshot space contains a collection of basis functions that can capture most of the 
fine features of the solution. The multiscale basis functions are computed by 
selecting the dominant modes of the snapshot space through the local spectral problem.
Once the basis functions are identified, the CG global coupling is given through the 
variational formulation 
$$ a(u_{ms}, v)  = f(v) \quad \forall v \in V_{\text{off}},$$
where $V_{\text{off}}$, called the {\em offline space}, 
is the space spanned by the multiscale basis functions.
In order to obtain an efficient representation of solution, 
it is desirable to determine the number of basis functions per coarse 
neighborhood adaptively based on the heterogeneities of the coefficient $\kappa$.
In \cite{chung2014adaptive}, a residual based {\it a posteriori} error indicator is 
derived and an adaptive basis enrichment algorithm is developed under the CG formulation.
In particular, it is shown that
$$ \norm{u-u_{ms}}_V^2 \leq C \sum_{i=1}^{N_c} \norm{R_i}^2_{V_i^\ast}
\left( \lambda_{l_i+1}^{(i)} \right)^{-1},$$
where $V_i^\ast$ is the dual space to $V_i := H^1_0(\omega_i) \cap V$, $R_i\in V_i^\ast$
is the residual operator with respect to 
the multiscale solution $u_{ms}$ on $\omega_i$ and $\lambda_{l_i+1}^{(i)}$ is the smallest eigenvalue whose 
eigenvector is excluded in the construction of the offline space on coarse neighborhood $\omega_i$.
Thus, local residuals of the multiscale solution together with the corresponding 
eigenvalues give indicators to the error of the solution in the energy norm. 
One can then enrich the multiscale space by selectively adding basis functions 
corresponding to the coarse neighborhoods in which indicators are large. 

On the other hand, for some applications it can be beneficial to adaptively construct 
new {\it online} basis functions during the course of the adaptive algorithm
to capture distant effects. 
In \cite{chung2015residual}, such online adaptivity is proposed and mathematically 
analyzed. More precisely, when the local residual related to some coarse neighborhood 
$\omega_i$ is large, one may construct a new 
basis function $\phi_i \in V_i$ in the online stage by solving
$$ a(\phi_i,v) = R_i(v) \quad \forall v \in V_i,$$
then adding $\phi_i$ as one of the basis functions of multiscale space.
It is further shown that if the offline 
space $V_{\text{off}}$ contains sufficient information
in the form of offline basis functions, then the online basis construction leads to an 
efficient approximation of the fine-scale solution.

The adaptivity procedures discussed above are designed with the aim of reducing 
the error in the energy norm. In some applications, one may be more 
interested in reducing error measured by some quantity of interest or function of the 
solution other than a norm. For example, in flow applications, one needs to obtain a 
good approximation of the pressure in locations where the wells are situated.
Goal-oriented adaptivity \cite{AiOd00,BeRa01,FDZ16,HoPo11a,loseille2010fully,MoSt09,oden2002estimation,oden2000estimation,wang2011standard}
(and the references therein) can be used to more efficiently reduce the error in the
quantity of interest without necessarily achieving the same rate of error reduction
in a global sense. Goal-oriented adaptivity has been introduced within the setting of multiscale 
methodologies in for instance \cite{arndt2007goal, bauman2009adaptive, tinsley2006multiscale}, where
the authors review the framework of approximating a quantity of interest and 
investigate the use of this framework in a number of 
multiscale scientific 
applications (e.g.  quasicontinuum models and molecular dynamics). In \cite{JhDe12} the authors perform 
goal-oriented mesh refinement in the setting of numerical homogenization for nonlinear 
lattice elasticity problems. In \cite{legoll2017multiscale}, the {\it a posteriori} error estimate within the framework of multiscale finite element method was proposed; and, goal-oriented enrichment within the flexible GMsFEM framework with offline basis functions is discussed in \cite{chung2016goal,ChPoPu17}.

In this research, we develop an online basis construction for 
goal-oriented adaptivity within GMsFEM for \eqref{eqn:problem}.
For a given linear functional $g: V \rightarrow \mathbb{R}$, referred to as the goal 
functional, we seek to approximate $g(u)$ where $u$ is the solution to 
\eqref{eqn:primal_fs}. 
One may adaptively enrich the approximation space 
in order to reduce the goal-error defined by $\abs{g(u-u_{ms})}$, 
where $u_{ms}$ is the latest multiscale solution.
For the construction of goal-oriented adaptivity, the dual problem is
considered based on $\dualforma$, the formal adjoint of $\forma$
which satisfies $a^*(w,v) = a(v,w)$. In the current symmetric linear case, 
the dual form is identical to the primal. 
For the primal problem $a(u,v) = f(v)$ for all $v \in V$, 
the dual problem is to find $z\in V$ such that 
\begin{eqnarray}
a(z,v) = g(v) \quad \forall v \in V, \label{eqn:dual_fs}
\end{eqnarray}
where $g: V \to \mathbb{R}$ is the goal functional. 
For symmetric bilinear form $\forma$, the primal-dual equivalence  
$$f(z) = a(u,z) = a^*(z,u) = a(z,u) = g(u),$$
then follows.
Error estimates for the quantity of interest follow from the above equality 
and Galerkin orthogonality. 
For $u_{ms}$ and $z_{ms}$, the respective primal and dual multiscale solutions satisfy
\begin{eqnarray}
f(z-z_{ms}) = a(u,z-z_{ms}) = a(z-z_{ms},u-u_{ms}) = g(u-u_{ms}).
\label{eqn:pd-error}
\end{eqnarray}
Goal-oriented adaptivity for GMsFEM using offline basis construction was developed 
for \eqref{eqn:problem} in \cite{chung2016goal} 
and also in the setting of mixed methods in \cite{ChPoPu17}.  In both cases,
the goal-oriented methods based on either residual estimators or a multiscale version 
of the dual-weighted residual indicator were shown to decrease the goal-error more
efficiently than standard adaptivity. 
In this research, we add constructed online basis functions to the approximation space
in regions where the residuals are large.

The remainder of the paper is organized as follows. 
In Section \ref{sec:gmsfem}, we review the framework of GMsFEM. 
In Section \ref{sec:on_con}, we detail the construction of the primal and dual online 
basis functions and analyze the convergence of primal-dual enrichment. 
In Section \ref{sec:online_enrich}, we present the online adaptive algorithm with 
three enrichment strategies. In Section \ref{sec:num_res}, we then perform numerical 
experiments to demonstrate the efficiency of the proposed strategies. Concluding remarks are drawn in Section \ref{sec:con}.

\section{The GMsFEM}\label{sec:gmsfem}
In this section, we briefly overview the GMsFEM applied to the problem 
\eqref{eqn:problem}. For further details on GMsFEM we refer the reader to
\cite{chung2016adaptive,chung2014adaptive,Review12,efendiev2013generalized,eglp13}, 
and the references therein.
The framework of this systematic approach starts with the construction of snapshot 
functions. After that, one may obtain the multiscale basis functions by solving a 
class of specific spectral problems in the snapshot space
and these multiscale basis functions will be used to solve the multiscale solution. 
To improve the accuracy of the multiscale approximation, one may then 
adaptively construct more basis functions in the online stage.

\subsection{Snapshot space}
First, we present the construction of the snapshot space which is computed in the offline stage; that is, these snapshot functions are pre-computed before solving the actual problem. The snapshot space consists of harmonic extensions of fine-grid functions that are defined on the boundary of a generic neighborhood $\omega_i$ of $K_i$, where $K_i$ is a coarse element from the coarse partition $\mathcal{T}^H$ of the domain $\Omega$ and $\omega_i$ is the coarse neighborhood corresponding to the node $x_i$.

We denote the fine-grid function $\delta_l^h(x_k) := \delta_{lk}$
for $x_k \in J_h(\omega_i)$, where $J_h(\omega_i)$ denotes the set of fine-grid boundary nodes on $\partial \omega_i$. Denote the cardinality of $J_h(\omega_i)$ as $L_i$. Then, for $l= 1,\cdots,L_i$, the snapshot function $\eta_l^{(i)}$ is defined to be the solution to the following system
\begin{eqnarray*}
    -\text{div}(\kappa(x) \nabla \eta_l^{(i)})  = &  0 &\quad \text{in } \omega_i, \\
    \eta_l^{(i)}  = & \delta_l^h & \quad \text{on } \partial \omega_i.
\end{eqnarray*}
The local snapshot space $V_{snap}^{(i)}$ corresponding to the coarse neighborhood $\omega_i$ is defined as $ V_{snap}^{(i)} := \text{span} \{ \eta_l^{(i)}: l = 1,\cdots, L_i \}$. One may define the global snapshot space $V_{snap}$ as $ V_{snap} := \bigoplus_{i=1}^{N_c} V_{snap}^{(i)}.$

\subsection{Offline multiscale basis construction}
Next, we perform a spectral decomposition in the snapshot space and select the dominant modes (corresponding to small eigenvalues) to construct the multiscale space. 
Let $\omega_i$ be a coarse neighborhood corresponding to a coarse node $x_i$. 
For each $i = 1,\cdots, N_c$, the spectral problem is to find $\phi_j^{(i)} \in V_{snap}^{(i)}$ and $\lambda_j^{(i)} \in \mathbb{R}$ such that
\begin{eqnarray}
 a_i(\phi_j^{(i)}, w) = \lambda_j^{(i)} s_i(\phi_j^{(i)}, w) \quad \forall w \in V_{snap}^{(i)}, \quad j = 1,\cdots, L_i,
 \label{eqn:sp}
\end{eqnarray}
where $a_i(\cdot,\cdot)$ is a symmetric non-negative definite bilinear operator and $s_i(\cdot,\cdot)$ is a 
symmetric positive definite bilinear operators defined on 
$V_{snap}^{(i)} \times V_{snap}^{(i)}$, where the eigenfunctions  
$\phi_j^{(i)}$ are normalized to satisfy $s_i(\phi_j^{(i)},\phi_j^{(i)})=1$.
The analyses in \cite{efendiev2011multiscale, galvis2010domain} motivate the following definition of the spectral problem. Choose the bilinear forms to be
$$ a_i(v,w) := \int_{\omega_i}  \kappa(x) \nabla v \cdot \nabla w \ dx \quad \text{and} \quad s_i(v,w) := \int_{\omega_i} \tilde{\kappa}(x) vw \ dx,$$
where $\tilde{\kappa}(x) := H^2 \sum_{j=1}^{N_c} \kappa(x) \abs{\nabla \chi_j}^2$ and 
$\{\chi_j \}_{j=1}^{N_c}$ is a set of standard multiscale finite element basis functions,
which is a partition of unity. 
Specifically, the function $\chi_i$ satisfies the following system
\begin{eqnarray*}
    -\text{div}(\kappa(x) \nabla \chi_i ) = &  0 
                    &\quad \text{ in }  K \subset \omega_i, \\
    \chi_i =  & p_i & \quad \text{ on } \partial K, \\
    \chi_i = & 0 & \quad \text{ on } \partial \omega_i,
\end{eqnarray*}
for all coarse elements $K \subset \omega_i$, where $p_i$ is linear and continuous on 
$\partial K$. 

Assume that the eigenvalues obtained from \eqref{eqn:sp} are arranged in ascending order 
and we use the first $l_i \in \mathbb{N}^+$ eigenfunctions (corresponding to the smallest $l_i$ 
eigenvalues) to construct the local auxiliary multiscale space 
$V_{\text{off}}^{(i)} := \text{span} \{\chi_i \phi_j^{(i)} | \  j= 1,\cdots, l_i \}$. 
The global auxiliary space $V_{\text{off}}$ is the direct sum of these local auxiliary 
multiscale space, namely $V_{\text{off}} := \bigoplus_{i=1}^{N_c} V_{\text{off}}^{(i)}$.

The offline multiscale  solution $u_{ms} \in V_{\text{off}}$ then solves the 
variational problem 
\begin{eqnarray*}
    a(u_{ms},v) = f(v)  \quad \forall v \in V_{\text{off}},
\end{eqnarray*}
giving a lower-dimensional approximation of the fine-scale solution
of \eqref{eqn:primal_fs}.
Similarly, the dual offline multiscale problem: 
find $z_{ms} \in V_{\text{off}}$ such that 
\begin{eqnarray*}
    a(z_{ms},v) = g(v) \quad \forall v \in V_{\text{off}},
\end{eqnarray*}
offers a lower-dimensional approximation of the solution to \eqref{eqn:dual_fs}.

\section{Online construction}\label{sec:on_con}
In order to achieve rapid convergence of the sequence of low-rank approximations to the 
fine-scale solution, one may construct so-called {\it online basis functions}
to enrich the multiscale space $V_{\text{off}}$ defined in the previous section.
In this section, we will give the details of the construction of online basis functions 
for both primal and dual problems. 

For analytical convergence of the method we rely on the pre-computed basis functions 
from $V_{\text{off}}$ satisfying the online error reduction property 
(ONERP) (see \cite{chung2015residual}, and Section \ref{subsec:err-est}
 below), meaning sufficiently many offline basis functions
are used in the approximation.  Then, the addition of the constructed online basis 
functions yields provable error reduction, at a guaranteed rate. 
As in \cite{chung2015residual}, 
the ONERP is required in order to archive rapid 
analytical and numerical convergence independent of the contrast in the permeability 
field for general quantities of interest.
While our numerical results indicate fast convergence for
certain (highly localized) quantities of interest may occur even without this 
property, the convergence is not robust with respect to the contrast without 
the satisfaction of the ONERP. 

\subsection{Online basis functions}\label{subsec:online-basis}
Let the index $m\in \mathbb{N}$ represent the enrichment level of the adaptive algorithm 
and $V_{ms}^m$ denote the corresponding multiscale space. 
On iteration $m$ the primal multiscale solution
 $u_{ms}^m \in V_{ms}^m$ solves
\begin{eqnarray}\label{eqn:primal-ms}
a(u_{ms}^m,v) = f(v) \quad \forall v \in V_{ms}^m,
\end{eqnarray}
and the dual multiscale solution $z_{ms}^m \in V_{ms}^m$ solves
\begin{eqnarray}\label{eqn:dual-ms}
a(z_{ms}^m,v) = g(v) \quad \forall v \in V_{ms}^{m}.
\end{eqnarray}
For $m > 0$, the space $V_{ms}^m$ generally contains both offline and online functions 
and initially one can set $V_{ms}^0 = V_{\text{off}}$.
The computation of these online basis functions is based on 
Riesz-representation of the local residuals for 
the current multiscale primal and dual solutions $u_{ms}^m$ and $z_{ms}^m$.

Let $\omega_i$ be a given coarse neighborhood of the computational domain $\Omega$
and let $V_i := H_0^1(\omega_i) \cap V$.
Recall that $u \in V$ and $z \in V$ are the fine-scale solutions to \eqref{eqn:primal_fs} and \eqref{eqn:dual_fs}.

Define the primal and dual residuals as follows
\begin{align}
\label{eqn:primal-resi}
R_i^u(v) &:= f(v) - a(u_{ms}^m,v), ~v \in V_i, &  R^u(v) := f(v) - a(u_{ms}^m,v), ~v \in V,
\\ \label{eqn:dual-resi}
R_i^z(v) &:= g(v) - a(z_{ms}^m,v), ~v \in V_i, & R^z(v) := g(v) - a(z_{ms}^m,v), ~v \in V.
\end{align}

Let $e = u - u_{ms}^m$ and $e^\ast = z - z_{ms}^m$. 
We seek a function $\phi$ that solves $g(e -\phi) = 0$. Assume $\{ \chi_i \}_{i=1}^{N_c}$ is a set of partition of unity functions subordinate to the coarse grid. 
By the definition of dual problem \eqref{eqn:dual_fs}, the linearity of $g(\cdot)$ 
and symmetry of $a(\cdot,\cdot)$ we have 
\begin{eqnarray}\label{eqn:phi_001}
g(e-\phi) = g(e) - g(\phi) = a(z,e) - a(z,\phi) = a(e,z) - a(\phi,z).
\end{eqnarray}

Localizing each term on the right-hand side of \eqref{eqn:phi_001} over all the
coarse neighborhoods $\omega_i$ using the partition of unity functions yields
\begin{eqnarray}\label{eqn:phi_002}
a(e,z) = \sum_{i = 1}^{N_c} a(u,\chi_i z) - a(u_{ms}^m, \chi_i z) 
  = \sum_{i = 1}^{N_c} R_i^u(\chi_i z),
\end{eqnarray}
and similarly
\begin{eqnarray}\label{eqn:phi_003}
a(\phi,z) = \sum_{i = 1}^{N_c} a(\phi,\chi_i z),
\end{eqnarray}
which suggests finding the local function $\phi_i \in V_i$ by solving 
\begin{eqnarray}\label{eqn:phi_004}
a(\phi_i,v) = R_i^u(v) \quad \forall v \in V_i.
\end{eqnarray}
This agrees with the online basis construction of \cite{chung2015residual},
where the construction is found by least-squares minimization of the 
energy norm of the error.  Noting the solution $\phi_i$ to 
\eqref{eqn:phi_004} satisfies $\norm{\phi_i}_{V_i} = \norm{R_i^u}_{V_i^\ast}$, it holds
that 
$\norm{u - (u_{ms}^m + \alpha \phi_i)}_V^2 = \norm{u - u_{ms}^m}_V^2 - \norm{\phi_i}^2_{V_i}$
for $\alpha = a(u-u_{ms},\phi_i)$. 
If the basis function $\phi_i$ were included in the basis functions of $V_{ms}^{m+1}$, by C\'{e}a's lemma we can obtain an upper bound for the energy error.
Specifically, if $u_{ms}^{m+1}$ is the solution to \eqref{eqn:primal-ms} with
$V_{ms}^{m+1} = V_{ms} \oplus \spa\{\phi_i\}$, it holds that
\begin{eqnarray}
\norm{u - u_{ms}^{m+1}}_V^2 \le \norm{u - u_{ms}^m}_V^2 - \norm{\phi_i}^2_{V_i}.
\label{eqn:primal-reduc-i}
\end{eqnarray}

On the other hand, we have the primal-dual equivalence \eqref{eqn:pd-error}, that is
$ g(e) = f(e^\ast).$
Similarly to above, but seeking a function $\psi$ where $f(e^\ast - \psi) = 0$, we obtain
\begin{eqnarray}\label{eqn:psi_001}
f(e^\ast - \psi) = f(e^\ast) - f(\psi) = a(u, e^\ast) - a(u, \psi)
                 = a(e^\ast,u) - a(\psi, u).
\end{eqnarray}
As in \eqref{eqn:phi_002}-\eqref{eqn:phi_003} we have
\begin{eqnarray}
\label{eqn:psi_002}
a(e^\ast,u)  = \sum_{i = 1}^{N_c} a(e^\ast, \chi_i u) 
  =\sum_{i = 1}^{N_c} R^z_i(\chi_i u) 
~\text{ and }~
a(\psi, u)   =  \sum_{i = 1}^{N_c} a(\psi, \chi_i u).
\end{eqnarray}
Putting \eqref{eqn:psi_002} into \eqref{eqn:psi_001} suggests solving
\begin{eqnarray}\label{eqn:psi_004}
a(\psi_i,v) = R^z_i(v) \quad \forall v \in V_i.
\end{eqnarray}
This is now the dual form of auxiliary problem \eqref{eqn:phi_004}.
Importantly, by the definition of the dual residual \eqref{eqn:dual-resi}, 
the basis functions $\psi_i$ defined by  
\eqref{eqn:psi_004} contain localized information on features of the goal-functional
not captured by the current approximation.
Analogously to \eqref{eqn:primal-reduc-i}, if the space $V_{ms}^{m+1}$ is constructed 
by $V_{ms}^{m+1} = V_{ms}^m \oplus \spa \{\psi_i\}$, it holds that 
\begin{eqnarray}
\norm{z - z_{ms}^{m+1}}_V^2 \le \norm{z - z_{ms}^m}_V^2 - \norm{\psi_i}^2_{V_i}.
\label{eqn:dual-reduc-j}
\end{eqnarray}

By the standard bound from \eqref{eqn:pd-error} on the error in the goal-functional 
in terms of the primal and dual energy-norm errors \cite{FDZ16,HoPo11a,MoSt09}
$ | g(u - u_{ms})| = |a(u-u_{ms},z-z_{ms})| \le \norm{u-u_{ms}}_V\norm{z-z_{ms}}_V,$
reduction of the error in the quantity of interest can be assured by reductions in
energy error of both primal and dual solutions.
Putting this together with \eqref{eqn:primal-reduc-i} and \eqref{eqn:dual-reduc-j} 
we have for arbitrary $1 \le i,j \le N_c$ that if
$V_{ms}^{m+1} = V_{ms}^m \oplus \spa \{\phi_i,\psi_j\}$, it holds that 
\begin{eqnarray}\label{eqn:motivate1}
\left| g(u - u_{ms}^{m+1}) \right| \le
\big( \norm{u - u_{ms}^m}^2 - \norm{\phi_i}^2_{V_i}\big)^{1/2}
\big( \norm{z - z_{ms}^m}^2 - \norm{\psi_j}^2_{V_j}\big)^{1/2}.
\end{eqnarray}
This estimate motivates the enrichment strategies in Section \ref{sec:online_enrich} 
where online basis functions are added according to the 
ordering of their magnitude in local energy norm. 
More than one primal or dual functions 
may be added in the construction of $V_{ms}^{m+1}$ and basis functions with 
overlapping neighborhoods may be added.  However, assuming the primal enrichment
neighborhoods are non-overlapping and the same for the dual,
an assured rate of goal-error reduction may be deduced assuming the offline space contains
sufficient information. This result is presented in the following section.

\subsection{Error estimation}\label{subsec:err-est}
Next, we show a sufficient condition for reduction in the goal-error.
The following results are summarized from \cite{chung2015residual} and extended 
to the dual problem.
Let $I_p, I_d \subset \{ 1, 2,\cdots, N_c \}$ be the index sets over coarse neighborhoods,
where the neighborhoods $\omega_i, i \in I_p$ are non-overlapping, as are the 
neighborhoods $\omega_j, j \in I_d$.
For each $i \in I_p$ and $j \in I_d$, define the online basis functions $\phi_i$ 
by the solution to \eqref{eqn:phi_004} and $\psi_j$ by \eqref{eqn:psi_004}.  
Set
$V_{ms}^{m+1} = V_{ms}^m  \oplus \spa\{\phi_i, \psi_j: i \in I_p,~ j \in I_d\}$. 
Let $ r_i = \norm{R_i^u}_{V_i^*}$ and  $r_j^* = \norm{R_j^z}_{V_j^*}.$
Let
$\Lambda_p = \min_{i \in I_p} \lambda_{l_i +1}^{(i)}$ and  
$\Lambda_d = \min_{j \in I_d} \lambda_{l_j +1}^{(j)}$,
where $\lambda_{l_i +1}^{(i)}$ is the $(l_i +1)$-th eigenvalue corresponding to 
\eqref{eqn:sp} in the coarse neighborhood $\omega_i$. From \cite[Equation (15)]{chung2015residual}, 
we have the following estimates for primal and dual energy norm error reduction
\begin{eqnarray}\label{eqn:primal-onerp}
  \norm{u-u_{ms}^{m+1}}_V & \leq  & 
  \bigg( 1- \frac{\Lambda_{p}}{C_{err}} 
  \frac{\sum_{i\in I_p}r_i^2 (\lambda_{l_i +1}^{(i)})^{-1}}
  {\sum_{i=1}^{N_c} r_i^2 (\lambda_{l_i +1}^{(i)})^{-1}} \bigg)^{1/2} 
  \norm{u-u_{ms}^m}_V, \\ \label{eqn:dual-onerp}
  \norm{z-z_{ms}^{m+1}}_V  & \leq  & 
  \bigg( 1- \frac{\Lambda_{d}}{C_{err}} 
  \frac{\sum_{j \in I_d}(r_j^*)^2 (\lambda_{l_j +1}^{(j)})^{-1}}{\sum_{j=1}^{N_c} 
  (r_j^*)^2 (\lambda_{l_j +1}^{(j)})^{-1}} \bigg)^{1/2} \norm{z-z_{ms}^m}_V,
\end{eqnarray}
where $C_{err}$ is a uniform constant independent of the contrast $\kappa(x)$ 
\cite[Theorem 4.1]{chung2014adaptive}. 

\begin{theorem}
Assume that the multiscale space $V_{\text{off}}$ satisfies online error reduction 
property (ONERP). 
That is, there is some constant $\theta_0 \in (0,1)$, 
with $\theta_0 > \delta$ where $\delta$ is independent of the permeability field 
$\kappa$ and
\begin{eqnarray}\label{eqn:pd-onerp-cond}
\frac{\Lambda_{p}}{C_{err}} 
\frac{\sum_{i\in I_p}(r_i)^2 (\lambda_{l_i +1}^{(i)})^{-1}}
{\sum_{i=1}^{N_c} (r_i)^2 (\lambda_{l_i +1}^{(i)})^{-1}}  \geq \theta_0 ~\text{ and }~
 \frac{\Lambda_{d}}{C_{err}} 
\frac{\sum_{j\in I_d}(r_j^*)^2 (\lambda_{l_j +1}^{(j)})^{-1}}
{\sum_{j=1}^{N_c} (r_j^*)^2 (\lambda_{l_j +1}^{(j)})^{-1}}  \geq \theta_0.
\end{eqnarray}
Then, the error in terms of a given quantity of interest $g(\cdot)$ satisfies the 
following estimate
\begin{eqnarray}
\abs{g(u-u_{ms}^{m+1})} &\leq& (1-\theta_0) \norm{u-u_{ms}^m}_V \norm{z-z_{ms}^m}_V
\nonumber \\
   &\leq& (1-\theta_0)^{m+1} \norm{u-u_{ms}^0}_V \norm{z-z_{ms}^0}_V.
\label{eqn:error-reduce}
\end{eqnarray}
\end{theorem}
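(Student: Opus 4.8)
The plan is to combine the single-step energy-norm reduction estimates \eqref{eqn:primal-onerp} and \eqref{eqn:dual-onerp} with the primal--dual product bound $\abs{g(u-u_{ms})} \le \norm{u-u_{ms}}_V \norm{z-z_{ms}}_V$ that follows from \eqref{eqn:pd-error} by Cauchy--Schwarz, and then to iterate. First I would insert the primal half of the ONERP hypothesis \eqref{eqn:pd-onerp-cond} into \eqref{eqn:primal-onerp}: the bracketed factor there is exactly $1$ minus the quantity that \eqref{eqn:pd-onerp-cond} bounds below by $\theta_0$, so monotonicity of $t \mapsto (1-t)^{1/2}$ gives $\norm{u-u_{ms}^{m+1}}_V \le (1-\theta_0)^{1/2}\norm{u-u_{ms}^m}_V$. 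Substituting the dual half of \eqref{eqn:pd-onerp-cond} into \eqref{eqn:dual-onerp} yields $\norm{z-z_{ms}^{m+1}}_V \le (1-\theta_0)^{1/2}\norm{z-z_{ms}^m}_V$ in the same way.

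Next I would apply the product bound at enrichment level $m+1$ and multiply the two single-step estimates,
\[
\abs{g(u-u_{ms}^{m+1})} \le \norm{u-u_{ms}^{m+1}}_V \norm{z-z_{ms}^{m+1}}_V \le (1-\theta_0)\norm{u-u_{ms}^m}_V\norm{z-z_{ms}^m}_V,
\]
the two factors of $(1-\theta_0)^{1/2}$ collapsing to the single factor $(1-\theta_0)$; this is the first inequality of \eqref{eqn:error-reduce}. For the geometric bound I would simply iterate the two scalar recursions $m+1$ times, obtaining $\norm{u-u_{ms}^{m+1}}_V \le (1-\theta_0)^{(m+1)/2}\norm{u-u_{ms}^0}_V$ and the analogous dual estimate, then multiply to recover the second line of \eqref{eqn:error-reduce}.

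The only delicate point --- and what I would flag as the main obstacle --- is the legitimacy of the iteration: it presupposes that the ONERP inequalities \eqref{eqn:pd-onerp-cond} hold not only at level $m$ but at every level $0,1,\dots,m$, with the index sets $I_p,I_d$ re-selected and the residuals $r_i,r_j^*$ re-evaluated against the current iterates $u_{ms}^k, z_{ms}^k$ at each step, all governed by a single common $\theta_0$. This is exactly the structural content of ``the offline space contains sufficient information,'' and I would make it a standing hypothesis of the iteration so that the same contraction factor applies at every step. Granting that, the argument is a direct chain of substitutions into the already-established estimates \eqref{eqn:primal-onerp}--\eqref{eqn:dual-onerp}, requiring no new estimation.
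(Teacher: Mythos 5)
Your proposal is correct and follows essentially the same route as the paper's proof: substitute the ONERP hypothesis \eqref{eqn:pd-onerp-cond} into the one-step reductions \eqref{eqn:primal-onerp}--\eqref{eqn:dual-onerp}, bound $\abs{g(u-u_{ms}^{m+1})}$ by $\norm{u-u_{ms}^{m+1}}_V\norm{z-z_{ms}^{m+1}}_V$ via the Galerkin-orthogonality identity behind \eqref{eqn:pd-error}, and iterate the energy-norm contractions to get the geometric bound. Your explicit remark that the iteration requires \eqref{eqn:pd-onerp-cond} to hold with the same $\theta_0$ at every enrichment level is a fair clarification of an assumption the paper leaves implicit in the phrase ``iterating the result,'' but it does not change the argument.
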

\begin{proof}
Let $u$ and $z$ be the respective primal and dual (fine-scale) solutions to 
\eqref{eqn:primal_fs} and \eqref{eqn:dual_fs}. 
By the definition of the dual problem and 
Galerkin orthogonality
$$ g(u-u_{ms}^{m+1}) = a(u-u_{ms}^{m+1}, z) = a(u-u_{ms}^{m+1}, z-z_{ms}^{m+1}).$$
Therefore by \eqref{eqn:primal-onerp}, \eqref{eqn:dual-onerp} and satisfaction of
\eqref{eqn:pd-onerp-cond} we have
\begin{eqnarray*}
	\abs{g(u-u_{ms}^{m+1})}  \leq  
	\norm{u-u_{ms}^{m+1}}_V \norm{z-z_{ms}^{m+1}}_V 
	 \leq & (1-\theta_0) \norm{u-u_{ms}^{m}}_V \norm{z-z_{ms}^{m}}_V.
\end{eqnarray*}
Iterating the result for the primal and dual error reduction in the energy norm
yields the second inequality of \eqref{eqn:error-reduce}.
\end{proof}
Consistent with this analysis,
the improvement in error reduction with both primal and dual online basis 
constructions is strongly evident in our numerical results which follow.
Moreover, when sufficiently many offline basis functions are used, meaning 
$\Lambda_p$ and $\Lambda_d$ are large enough, rapid convergence of the error 
in the goal-functional is observed as online basis functions are added to the
multiscale space.

\section{Online adaptive algorithm} \label{sec:online_enrich}
In this section, we give the details of the adaptive algorithm with the online 
construction. 
The adaptive algorithm is based on the local enrichments of 
online basis functions for both primal and dual problems.  
We use the eigenvalue information obtained in \eqref{eqn:sp} as well as the norms of local primal and dual residual operators as the indicators. 
During the online stage, the regions with larger indicators should require more enrichments of basis functions in order to reduce the error. 
Using these indicators, we construct the corresponding primal and dual online basis functions by solving \eqref{eqn:phi_004} and \eqref{eqn:psi_004}, respectively.
In the following sections, three different enrichment strategies based on these local indicators will be proposed.
\subsection{Standard enrichment}\label{subsec:std-enrich}
In this section, we propose the first strategy referred to as {\it standard enrichment}. 
In this strategy, primal and dual (online) basis functions are added based on the largest local
residuals in each of the primal and dual problems. As the two sets of residuals are 
considered separately, this strategy aims to reduce the largest source of error in 
each problem.

\underline{\bf Algorithm: standard enrichment}

Set $m = 0$. Pick two parameters $\gamma, \theta \in (0,1]$ and denote 
$V_{ms}^m = V_{\text{off}}$.
Choose a small tolerance $\texttt{tol} \in \mathbb{R}_+$. For each $m \in \mathbb{N}$, assume that $V_{ms}^m$ is given. Go to {\bf Step 1} below.
\begin{itemize}
    \item [{\bf Step 1:}] Solve the equations \eqref{eqn:primal-ms} and 
    \eqref{eqn:dual-ms} to obtain the primal solution $u_{ms}^m \in V_{ms}^m$ and the dual solution $z_{ms}^m \in V_{ms}^m$.
    
    \item [{\bf Step 2:}]
    For each $i=1,\cdots, N_c$,
    compute the residuals $r_i$ and $r_i^*$ for the coarse neighborhood $\omega_i$. Assume that we have 
    $$ r_1 \geq r_2 \geq \cdots \geq r_{N_c} \quad \text{and} \quad r_1^* \geq r_2^* \geq \cdots \geq r_{N_c}^*. $$
    
    \item [{\bf Step 3:}] Take the smallest integer $k_p$ such that 
    $$ \theta \sum_{i=1}^{N_c} r_i^2 \leq \sum_{i=1}^{k_p} r_i^2. $$
    Next, for $i=1,\cdots,k_p$, add basis functions $\phi_i$ (by solving \eqref{eqn:phi_004}) in the space $V_{ms}^m$.
    
    Similarly, take the smallest integer $k_d$ such that 
    $$ \gamma \sum_{i=1}^{N_c} (r_i^*)^2 \leq \sum_{i=1}^{k_d} (r_i^*)^2.$$
    For $j= 1,\cdots, k_d$, add basis functions $\psi_j$ (by solving \eqref{eqn:psi_004}) in the space $V_{ms}^m$. Denote the new multiscale basis functions space as $V_{ms}^{m+1}$. That is,
    $$ V_{ms}^{m+1} = V_{ms}^m \oplus \text{span} \{ \phi_i,~ \psi_j: 1\leq i\leq k_p, 1\leq j \leq k_d \}.$$
    
    \item [{\bf Step 4:}] If $\sum_{i=1}^{N_c} r_i^2 \leq \texttt{tol}$ or the dimension of $V_{ms}^{m+1}$ is large enough, then stop. Otherwise, set $m \leftarrow m+1$ and go back to {\bf Step 1}.
\end{itemize}

\begin{remark}
If both $\theta$ and $\gamma$ are equal to $1$, then the enrichment is said to be 
{\it uniform}. The standard enrichment is equivalent to the residual-driven online 
method proposed in \cite{chung2015residual} if one sets $\gamma = 0$.
\end{remark}

\subsection{Primal-dual combined enrichment}\label{subsec:pd-combine}
In this section, we propose the second strategy for online enrichment, 
which combines the set of primal and dual residual indicators and 
selects neighborhoods to enrich with primal and/or dual basis functions 
based on the largest {\em overall} local residuals.
We refer this approach to {\it primal-dual combined enrichment}.
Here the basis functions related to the first $k \in \mathbb{N}^+$ largest indicators
will be added into the multiscale space. 
Our numerical results illustrate that this approach leads to a similar of accuracy 
with comparable and sometimes fewer DOF than the standard approach proposed in 
Section \ref{subsec:std-enrich}.

\underline{\bf Algorithm: primal-dual combined enrichment}

Set $m = 0$. Pick a parameter $\beta \in (0,1]$ and denote $V_{ms}^m = V_{\text{off}}$. 
Choose a small tolerance $\texttt{tol} \in \mathbb{R}_+$. For each $m\in \mathbb{N}$, assume that $V_{ms}^m$ is given. Go to {\bf Step 1} below.
\begin{itemize}
    \item [{\bf Step 1:}] Solve the equations \eqref{eqn:primal-ms} and 
    \eqref{eqn:dual-ms} to obtain the primal solution $u_{ms}^m \in V_{ms}^m$ and the dual solution $z_{ms}^m \in V_{ms}^m$.
    
    \item [{\bf Step 2:}]
    For each $i=1,\cdots, N_c$,
    compute the residuals $r_i$ and $r_i^*$ for every coarse 
neighborhood $\omega_i$. 
Denote $\{ s_j \}_{j=1}^{2N_c} = \{r_i\}_{i=1}^{N_c} \cup \{r_i^* \}_{i=1}^{N_c}$ 
and assume that 
$$ s_1 \geq s_2 \geq \cdots \geq s_{2N_c}. $$
    
    \item [{\bf Step 3:}] Take the smallest integer $k$ such that 
    $$ \beta \sum_{i=1}^{2N_c} s_i^2 \leq \sum_{i=1}^{k} s_i^2. $$
    Next, for $i=1,\cdots,k$, we add basis functions $\varphi_i$ in the space $V_{ms}^m$, where 
    $$ \varphi_i = \left \{ \begin{array}{ccc} 
    \phi_k & \quad & \text{if } s_i = r_k \text{ for some } k \in  \{1,\cdots, N_c\}, \\
    \psi_{\ell} & \quad & \text{if } s_i = r_{\ell}^* \text{ for some } \ell \in \{ 1,\cdots, N_c \}.
    \end{array} \right .$$
    Denote the new multiscale basis functions space as $V_{ms}^{m+1}$. That is,
    $$ V_{ms}^{m+1} = V_{ms}^m \oplus \text{span} \{ \varphi_i: 1\leq i\leq k \}.$$
    
    \item [{\bf Step 4:}] If $\sum_{i=1}^{2N_c} s_i^2 \leq \texttt{tol}$ or the dimension of $V_{ms}^{m+1}$ is large enough, then stop. Otherwise, set $m \leftarrow m+1$ and go back to {\bf Step 1}.
\end{itemize}

\subsection{Primal-dual product based enrichment}\label{subsec:pdpb}
In this section, we consider the goal-oriented indicator proposed in
\cite{chung2016goal} for offline enrichment, this time using it for online enrichment.
The local indicator in this strategy uses the product of primal and dual norms 
together with the inverse of the smallest eigenvalue excluded from the local 
multiscale (offline) space.
This indicator is motivated by the estimate shown in \cite{chung2016goal}
\begin{eqnarray}\label{eqn:reliable}
|g(u-u_{ms}^m)| \le \sum_{i = 1}^{N_c}\norm{u-u_{ms}^m}_{V_i}
      \norm{z-z_{ms}^m}_{V_i}\le C_{err} 
      \sum_{i = 1}^{N_c}
      r_i \cdot r_i^*\left (\lambda_{l_i +1}^{(i)} \right )^{-1}.
\end{eqnarray}
As such, the indicator 
$\eta_i := r_i \cdot r_i^*\left (\lambda_{l_i +1}^{(i)} \right )^{-1}$
provides a  {\em reliable} estimator as it serves as an upper bound for the 
goal-error.
In contrast, the indicators introduced in Sections 
\ref{subsec:std-enrich} and \ref{subsec:pd-combine} are based on the forward-looking
estimate
$$
|g(u-u_{ms}^{m+1})| \le 
\left(\|u-u_{ms}^m\|^2 - r_i^2 \right)^{1/2}
\left(\|z-z_{ms}^m\|^2 - (r_i^\ast)^2 \right)^{1/2},
$$
where $V_{ms}^{m+1}$ is formed by adding $\phi_i$ and $\psi_i$ to $V_{ms}^m$.
Comparison of the two bounds establishes why the product of local primal and dual
residuals together with the corresponding eigenvalue information is used in this
indicator; whereas, neither the product nor the eigenvalue information is used in
the first two strategies.
As we will see in the numerical experiments of Section \ref{sec:num_res}, while
the indicator $\eta_i$ proposed in this section is natural to consider, it does 
not perform as efficiently as the strategies developed specifically for online
enrichment.

\underline{\bf Algorithm: primal-dual product based enrichment}

Set $m = 0$. Pick a parameter $\tau \in (0,1]$ and denote $V_{ms}^m = V_{\text{off}}$. 
Choose a small tolerance $\texttt{tol} \in \mathbb{R}_+$. For each $m\in \mathbb{N}$, assume that $V_{ms}^m$ is given. Go to {\bf Step 1} below.
\begin{itemize}
    \item [{\bf Step 1:}] Solve the equations \eqref{eqn:primal-ms} and 
    \eqref{eqn:dual-ms} to obtain the primal solution $u_{ms}^m \in V_{ms}^m$ and the dual solution $z_{ms}^m \in V_{ms}^m$.
    
    \item [{\bf Step 2:}]
    For each $i=1,\cdots, N_c$,
    compute the residuals $r_i$ and $r_i^*$ for every coarse 
neighborhood $\omega_i$ and thus obtain the indicator $\eta_i$. Assume that the indicators $\{\eta_i\}_{i=1}^{N_c}$ are in descending order such that 
$$ \eta_1 \geq \eta_2 \geq \cdots \geq \eta_{N_c}.$$
    
    \item [{\bf Step 3:}] Take the smallest integer $k$ such that 
    $$ \tau \sum_{i=1}^{N_c} \eta_i \leq \sum_{i=1}^{k} \eta_i. $$
    Next, for $i=1,\cdots,k$, we add basis functions $\phi_i$ (by solving \eqref{eqn:phi_004}) and $\psi_i$ (by solving \eqref{eqn:psi_004}) in the space $V_{ms}^m$.
    Denote the new multiscale basis functions space as $V_{ms}^{m+1}$. That is,
    $$ V_{ms}^{m+1} = V_{ms}^m \oplus \text{span} \{ \phi_i, ~\psi_i: 1\leq i\leq k \} .$$
    
    \item [{\bf Step 4:}] If $\sum_{i=1}^{N_c} \eta_i \leq \texttt{tol}$ or the dimension of $V_{ms}^{m+1}$ is large enough, then stop. Otherwise, set $m \leftarrow m+1$ and go back to {\bf Step 1}.
\end{itemize}

\section{Numerical results} \label{sec:num_res}
In this section, we present some numerical results to show the performance of the 
proposed algorithms. The computational domain is $\Omega = (0,1)^2$. We use a rectangular 
mesh for the partition of the domain dividing $\Omega$ into $16 \times 16$ equal coarse 
square blocks and we divide each coarse block into $16 \times 16$ equal square pieces. 
In other words, the fine mesh contains $256 \times 256$ fine rectangular elements with 
the mesh size $h = 1/256$. The permeability field $\kappa$ and the source function $f$ 
used in the first two examples presented below are given in Figures \ref{fig:kappa} and 
\ref{fig:source}, respectively. 
We set the tolerance for the stopping criteria at 
$\texttt{tol} \approx 10^{-16}$. In the following we define the goal-error as 
\begin{eqnarray}\label{eqn:rel-error} 
e_{g,m} := \frac{\abs{g(u - u_{ms}^m)}}{\abs{g(u)}},
\end{eqnarray}
where $u$ is the fine-scale primal solution to \eqref{eqn:primal_fs} and 
$u_{ms}^m$ is the multiscale solution in enrichment level $m$. We refer $m \in \mathbb{N}$ to the enrichment level.

\begin{figure}[!ht]
\mbox{
\begin{subfigure}{0.32\textwidth}
\centering
\includegraphics[width=1.0\linewidth, height=4.5cm]{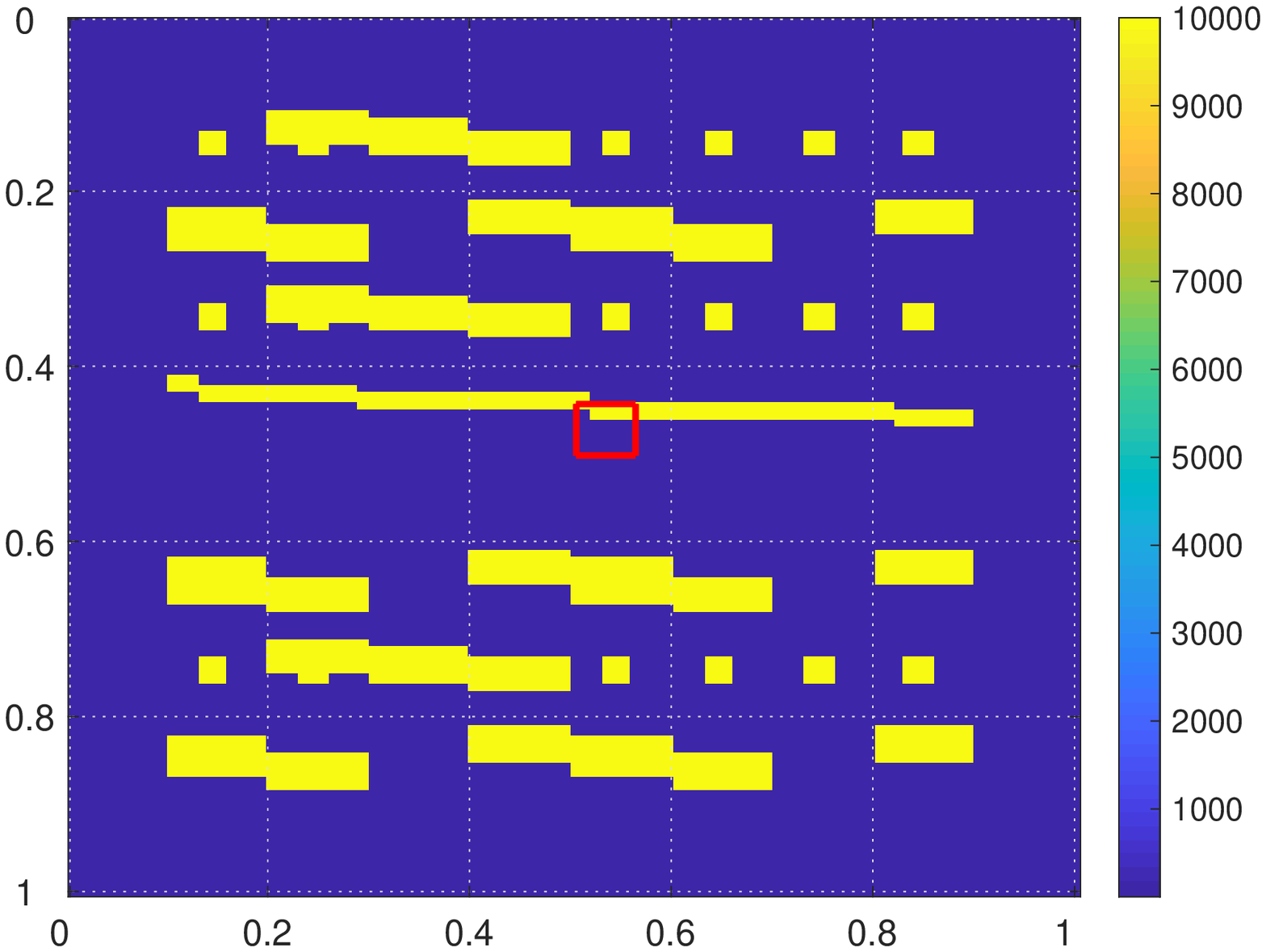} 
\caption{Permeability field $\kappa$.}
\label{fig:kappa}
\end{subfigure}

\begin{subfigure}{0.32\textwidth}
\centering
\includegraphics[width=1.0\linewidth, height=4.5cm]{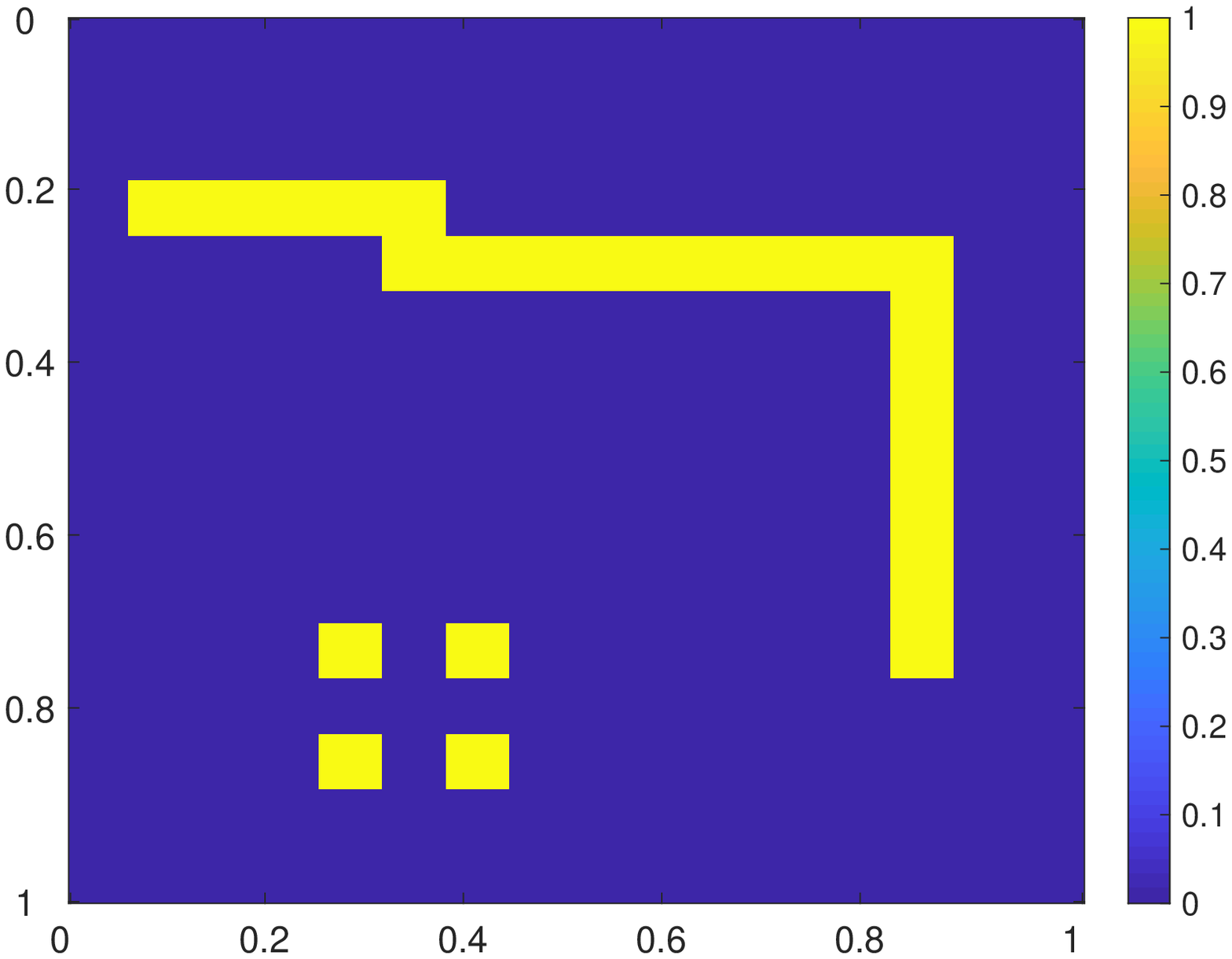}
\caption{Source function $f$.}
\label{fig:source}
\end{subfigure}

\begin{subfigure}{0.32\textwidth}
\centering
\includegraphics[width=1.0\linewidth, height=4.5cm]{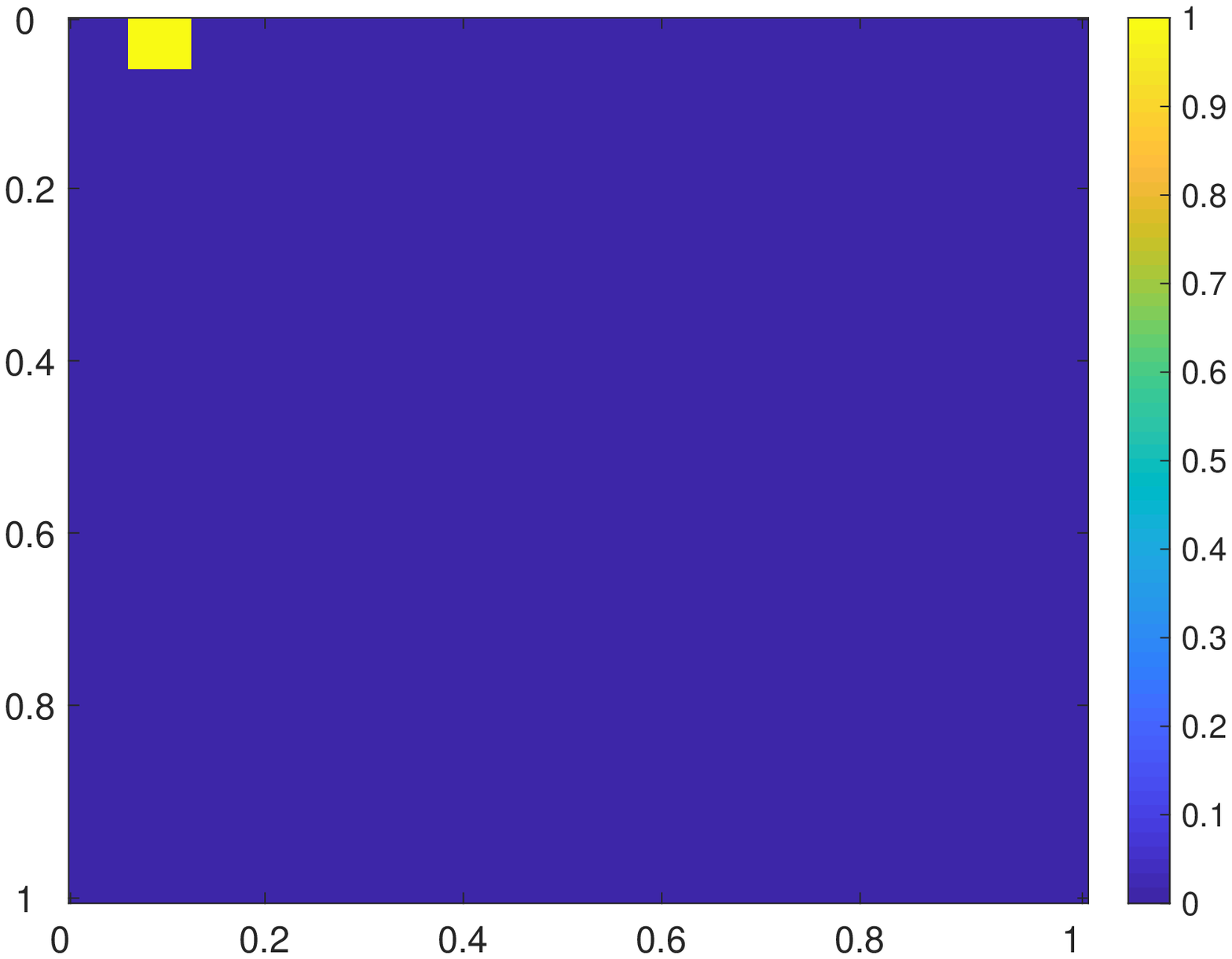}
\caption{Function $\mathbf{1}_K$.}
\label{fig:goal_fun_1}
\end{subfigure}
}
\caption{Numerical setting of the experiment.}
\label{fig:num_set}
\end{figure}

We present three examples to demonstrate the efficiency of the online goal-oriented 
enrichment. In the first example, we compare the performance between the 
{\it standard goal-oriented enrichment} proposed in Section \ref{subsec:std-enrich} and 
the {\it residual-driven based enrichment} in \cite{chung2015residual},
which adds only primal online basis functions to the multiscale space.
Next, in the second example, we analyze the capabilities for different online goal-oriented enrichments 
proposed in Section \ref{sec:online_enrich}. 
In the last example, we discuss the issue of ONERP 
by demonstrating the rate of error reduction from primal-dual online 
enrichment is robust with respect to the contrast so long as enough offline basis
functions are included in the initial multiscale space.  The necessary number of offline
basis functions may indeed depend on the contrast, in agreement with the theory.

\begin{remark}
When the current approximation $u_{ms}^m$ is close to the fine-scale solution $u$ in the region $\omega_j$, the norm of the online basis function $\phi_j$ (or $\psi_j$) will be very small. Including $\phi_j$ (or $\psi_j$) into the multiscale space $V_{ms}^m$ will make the stiffness matrix in the calculation close to singular. In all the examples below, online basis functions with norms on the order of $10^{-16}$ will not be added to the multiscale space.  This primarily affects the examples demonstrating uniform refinement ($\theta = 1$).
\end{remark}

\subsection{Example 1: Necessity of the dual}\label{subsec:ex1}
The goal functional $g: V \to \mathbb{R}$ is given as follows
\begin{eqnarray}
 g(v) := \int_{K} v(x) \ dx = \int_\Omega \mathbf{1}_K v(x) \ dx, 
 \label{eqn:def_g}
\end{eqnarray}
where $\mathbf{1}_K$ is the indicator function of coarse element 
$K=[1/16,1/8] \times [0,1/16]$. 
See Figure \ref{fig:goal_fun_1} for the visualization of $\mathbf{1}_K$.

First, we apply the standard enrichment proposed in Section \ref{subsec:std-enrich} and compute the goal 
error $e_{g,m}$.
In this example, we set the  number of initial basis functions 
$l_i = 3$ for each coarse neighborhood $\omega_i$. 
The results are presented in Figure \ref{fig:goal_error}. 
For instance, the blue curve in Figure \ref{fig:goal_error_1} refers the goal-error 
obtained by using the residual-driven based enrichment of \cite{chung2015residual} with 
$\theta = 1$. 
The red curve in Figure \ref{fig:goal_error_2} is the result obtained by the standard enrichment with $\theta = 1$ 
and $\gamma = 0.8$. 
Figure \ref{fig:goal_error_2} shows $\theta = \gamma = 0.8$, Figure \ref{fig:goal_error_3}
shows $\theta = \gamma = 0.5$ and Figure \ref{fig:goal_error_4} shows 
$\theta = \gamma = 0.3$. 

From the results of Figure \ref{fig:goal_error_1}, 
the goal-error reduction obtained by the standard enrichment behaves similarly to 
the example with only primal enrichment. 
With parameters $\theta = 1$ and $\gamma = 0.8$, 
both standard and primal-only enrichment strategies
include all the primal online basis functions computed at each stage.
In this setting the additional dual basis functions add only a modest amount of 
stability to the error reduction.
However, when the parameters $\theta$ and $\gamma$ are relatively small, 
the error reduction curve for the standard enrichment using primal and dual online basis 
functions is noticeably steeper hence more effective than the primal-only enrichment 
strategy.

\begin{figure}[!ht]
\mbox{
\begin{subfigure}{0.48\textwidth}
\centering
\includegraphics[width=1.0\linewidth, height = 5.5cm]{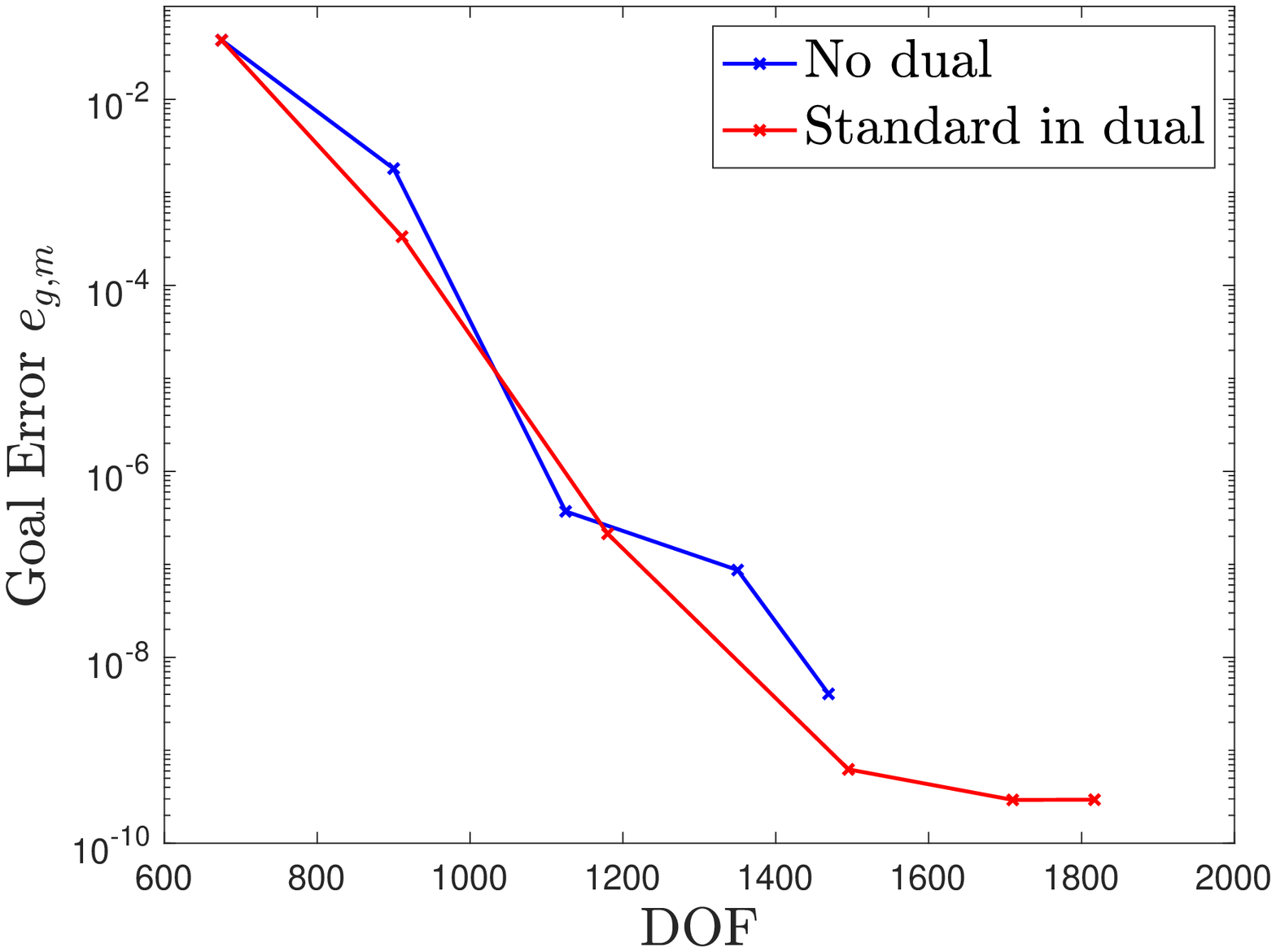}
\caption{$\theta = 1$, $\gamma = 0.8$}
\label{fig:goal_error_1}
\end{subfigure}

\begin{subfigure}{0.48\textwidth}
\centering
\includegraphics[width=1.0\linewidth, height = 5.5cm]{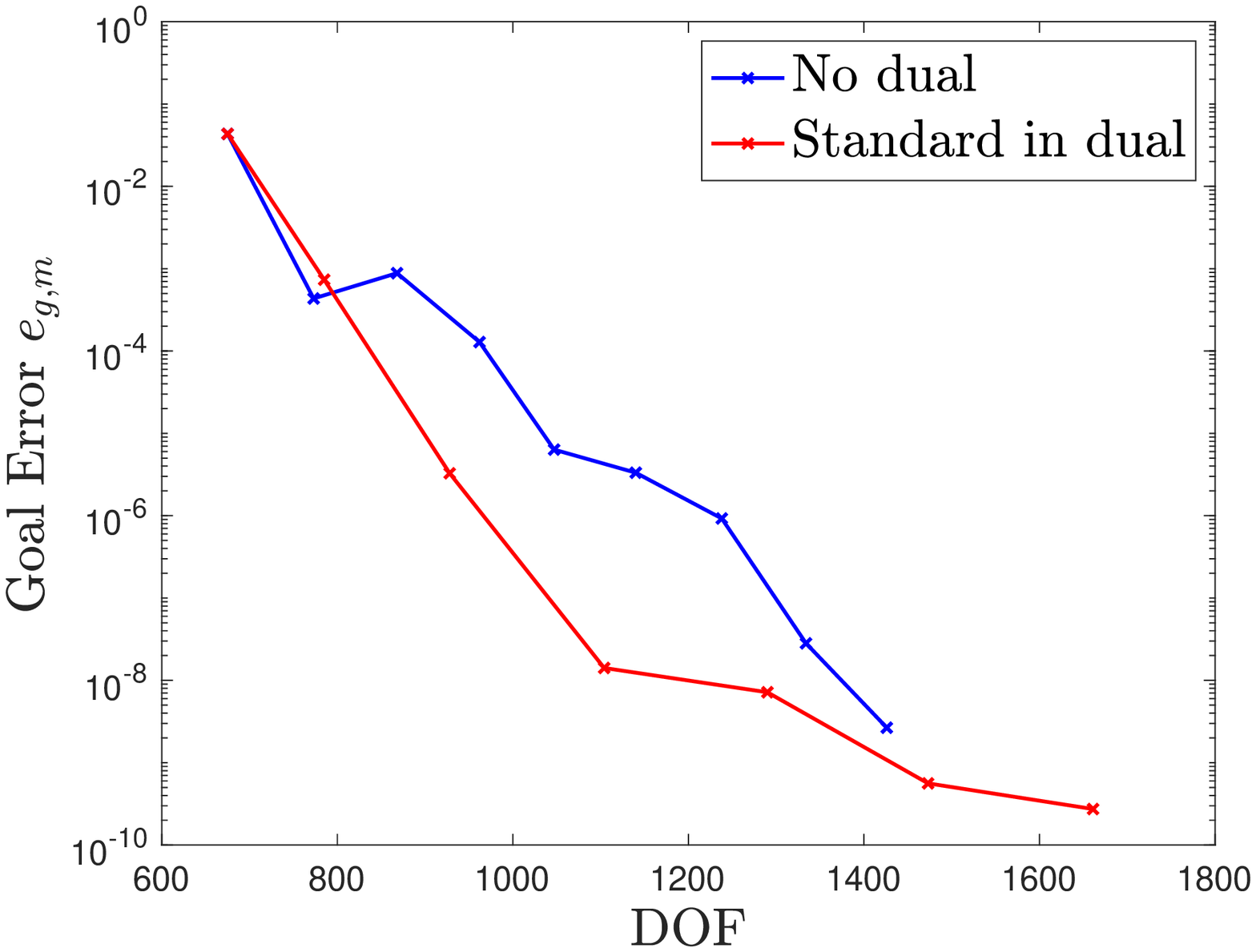}
\caption{$\theta = \gamma = 0.8$.}
\label{fig:goal_error_2}
\end{subfigure}
}
\mbox{
\begin{subfigure}{0.48\textwidth}
\centering
\includegraphics[width=1.0\linewidth, height = 5.5cm]{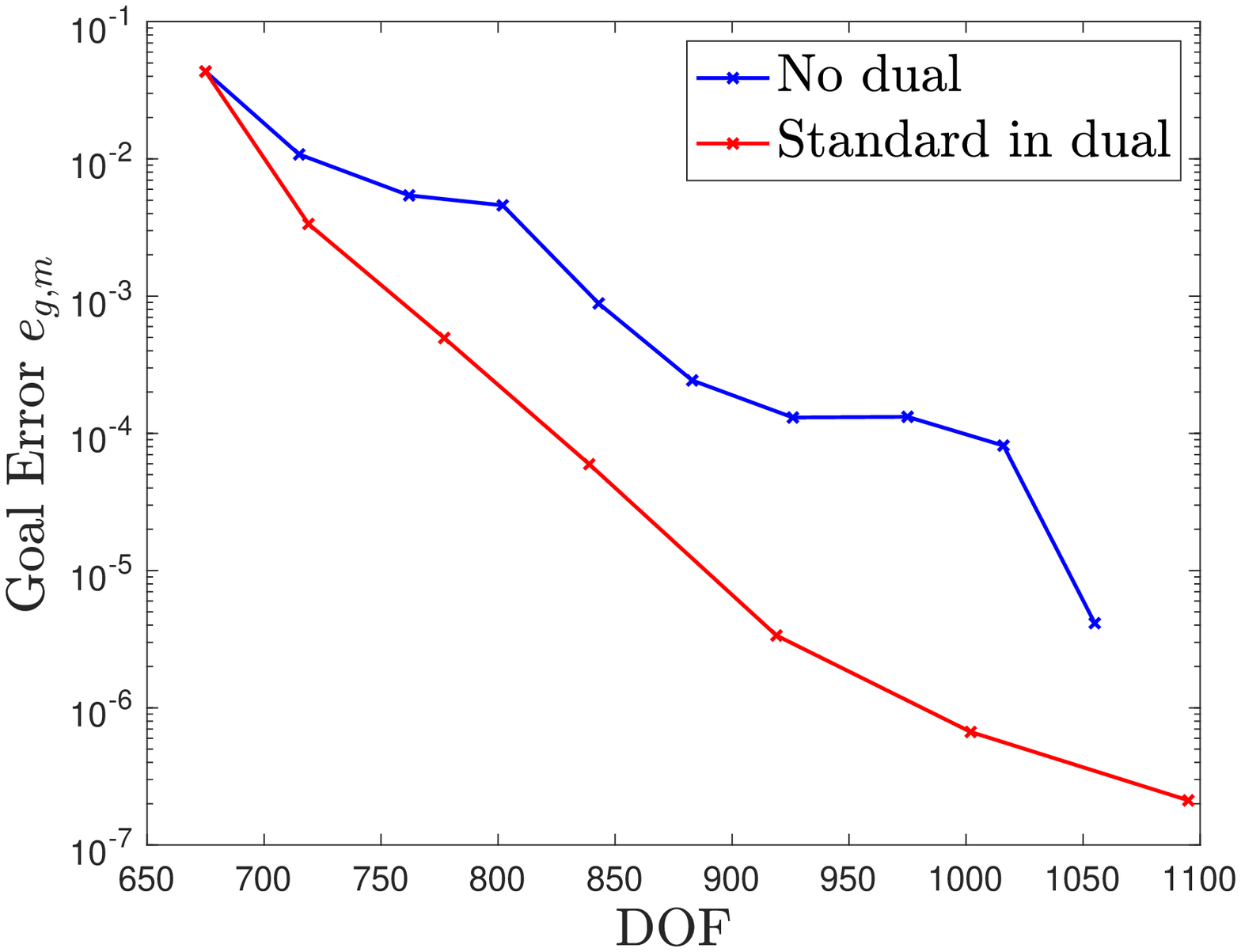}
\caption{$\theta = \gamma = 0.5$.}
\label{fig:goal_error_3}
\end{subfigure}

\begin{subfigure}{0.48\textwidth}
\centering
\includegraphics[width=1.0\linewidth, height = 5.5cm]{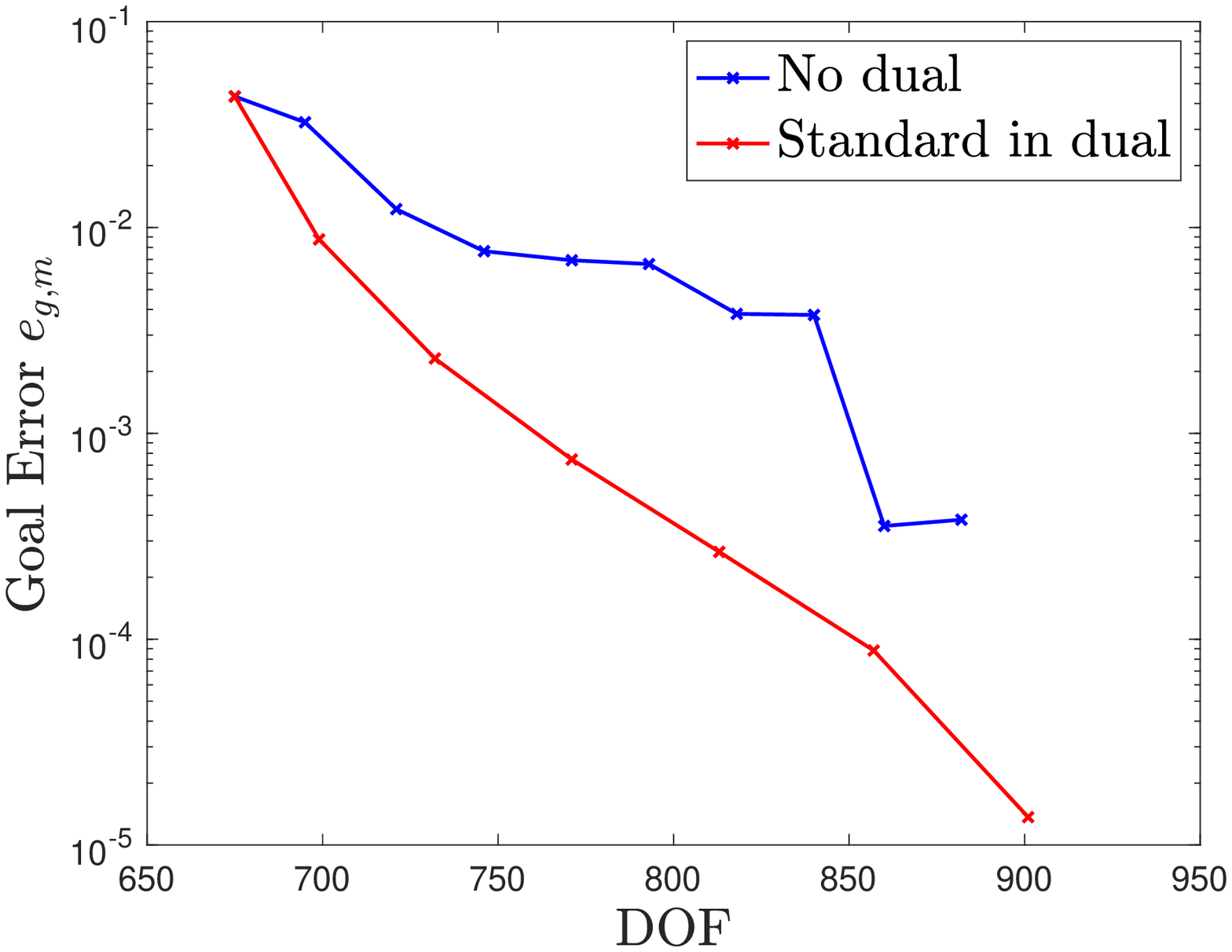}
\caption{$\theta = \gamma = 0.3$.}
\label{fig:goal_error_4}
\end{subfigure}
}
\caption{Goal error reduction against DOF. ($l_i = 3$)}
\label{fig:goal_error}
\end{figure}

\subsection{Example 2: Verification of enrichment strategies}
\label{subsec:ex2}
In this example, we investigate the performance of different online enrichments proposed 
in Section \ref{sec:online_enrich}. Here, we use the permeability field shown in Figure \ref{fig:kappa} and set $l_i = 3$. The goal functional in this example is 
given by \eqref{eqn:def_g}.

\subsubsection{Comparison with the residual-driven approach}\label{subsubsec:crda}
First, we compare the efficiency of each dual online enrichment strategy ({\it primial-dual combined and proimal-dual product based}) with the 
primal-only residual-driven approach of \cite{chung2015residual}.
The parameters are set to $\theta = \beta = \tau = 0.6$. 
The convergence history of goal-error using different approaches is shown in Table \ref{tab:dual_importance}. 
The profiles of fine-scale solution $u$, multiscale solution $u_{ms}^m$ ($m=5$) obtained by primal-dual combined approach, and their difference are sketched in Figure \ref{fig:sol_pro}.

One may observe that both primal-dual online enrichments outperform the primal 
residual-driven based approach in terms of the reduction in the goal-error. 
In particular, both enrichment strategies that incorporate the dual information 
drive the error decay to a certain range (e.g. $10^{-5} \sim 10^{-4}$) with fewer iterations than the residual-driven approach does. 
In the meantime, the primal-dual product based strategy in Section \ref{subsec:pdpb}
provides the greatest change in goal-error reduction on the first iteration, 
while the primal-dual combined algorithm in Section \ref{subsec:pd-combine} 
shows a greater decrease in error-reduction with fewer DOF as the simulation progresses.

\begin{table}[h!]
\centering
\begin{subtable}{.33\textwidth}
\centering
	\begin{tabular}{ccc}
	\hline
		$m$ & DOF & $e_{g,m}$   \\
	\hline 
		$0$ & $675$ & $0.0433$  \\
		$1$ & $732$ & $0.0101$   \\
		$2$ & $782$ & $0.0055$  \\	
		$3$ & $842$ & $0.0014$  \\	
		$4$ & $894$ & $1.48 \times 10^{-4}$ \\	
		$5$ & $940$ & $2.99 \times 10^{-5}$ \\			
	\hline
	\end{tabular}
	\caption{$\theta  = 0.6$}

	\label{tab:dual_importance_1}
\end{subtable}%
\begin{subtable}{.33\textwidth}
\centering
	\begin{tabular}{ccc}
	\hline
		$m$ & DOF & $e_{g,m}$   \\
	\hline 
		$0$ & $675$ & $0.0433$  \\
		$1$ & $714$ & $0.0105$   \\
		$2$ & $785$ & $6.61 \times 10^{-4}$  \\	
		$3$ & $869$ & $7.88 \times 10^{-5}$  \\	
		$4$ & $957$ & $9.05 \times 10^{-7}$ \\	
		$5$ & $1058$ & $2.23 \times 10^{-7}$ \\			
	\hline
	\end{tabular}
	\caption{$\beta = 0.6$}
	\label{tab:dual_importance_2}
\end{subtable}%
\begin{subtable}{.33\textwidth}
\centering
	\begin{tabular}{ccc}
	\hline
		$m$ & DOF & $e_{g,m}$   \\
	\hline 
		$0$ & $675$ & $0.0433$  \\
		$1$ & $745$ & $0.0049$   \\
		$2$ & $867$ & $5.41 \times 10^{-4}$  \\	
		$3$ & $969$ & $7.32 \times 10^{-5}$  \\	
		$4$ & $1075$ & $9.53 \times 10^{-6}$ \\	
		$5$ & $1187$ & $6.15 \times 10^{-7}$ \\			
	\hline
	\end{tabular}
	\caption{$\tau = 0.6$}
	\label{tab:dual_importance_3}
\end{subtable}
\caption{\small{Results of $e_{g,m}$. Left: primal-only enrichment. Middle: primal-dual combined. Right: primal-dual product.}}
\label{tab:dual_importance}
\end{table}

\begin{figure}[ht!]
\mbox{
\begin{subfigure}{0.32\textwidth}
\centering
\includegraphics[width=1.0\linewidth, height=4.5cm]{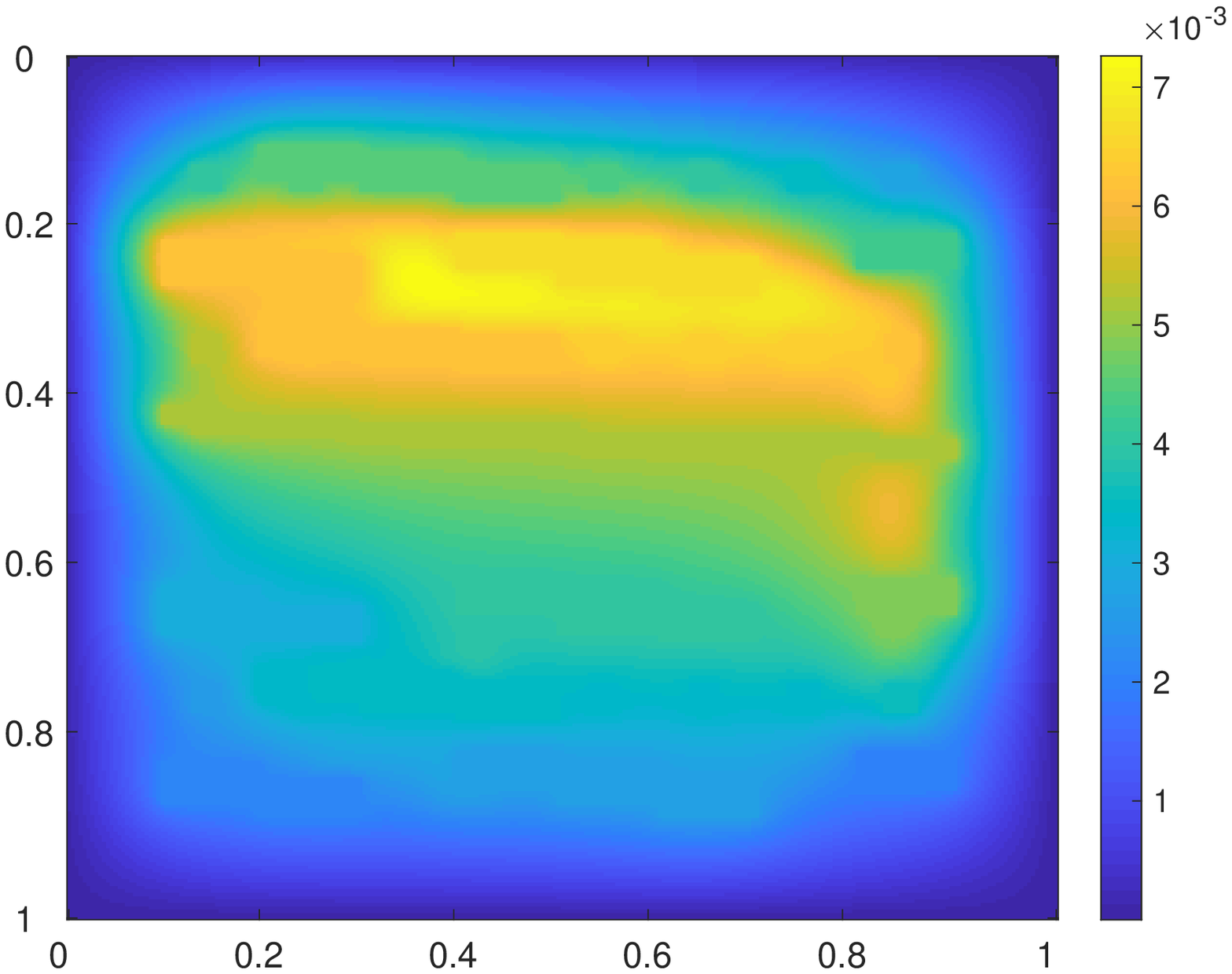} 
\caption{Fine-scale solution $u$.}
\label{fig:sol_pro_1}
\end{subfigure}

\begin{subfigure}{0.32\textwidth}
\centering
\includegraphics[width=1.0\linewidth, height=4.5cm]{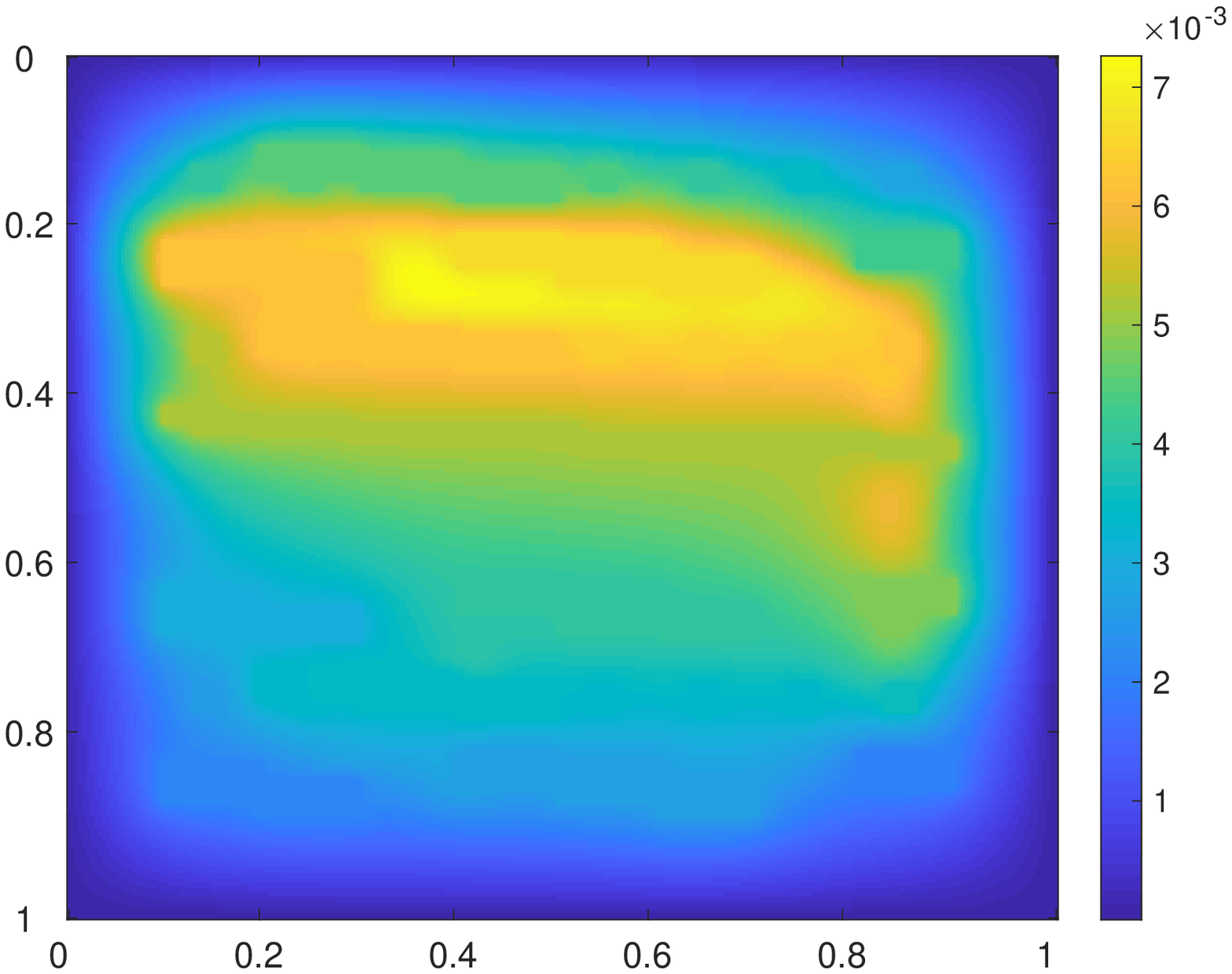}
\caption{Approximation $u_{ms}^m$.}
\label{fig:sol_pro_2}
\end{subfigure}

\begin{subfigure}{0.32\textwidth}
\centering
\includegraphics[width=1.0\linewidth, height=4.5cm]{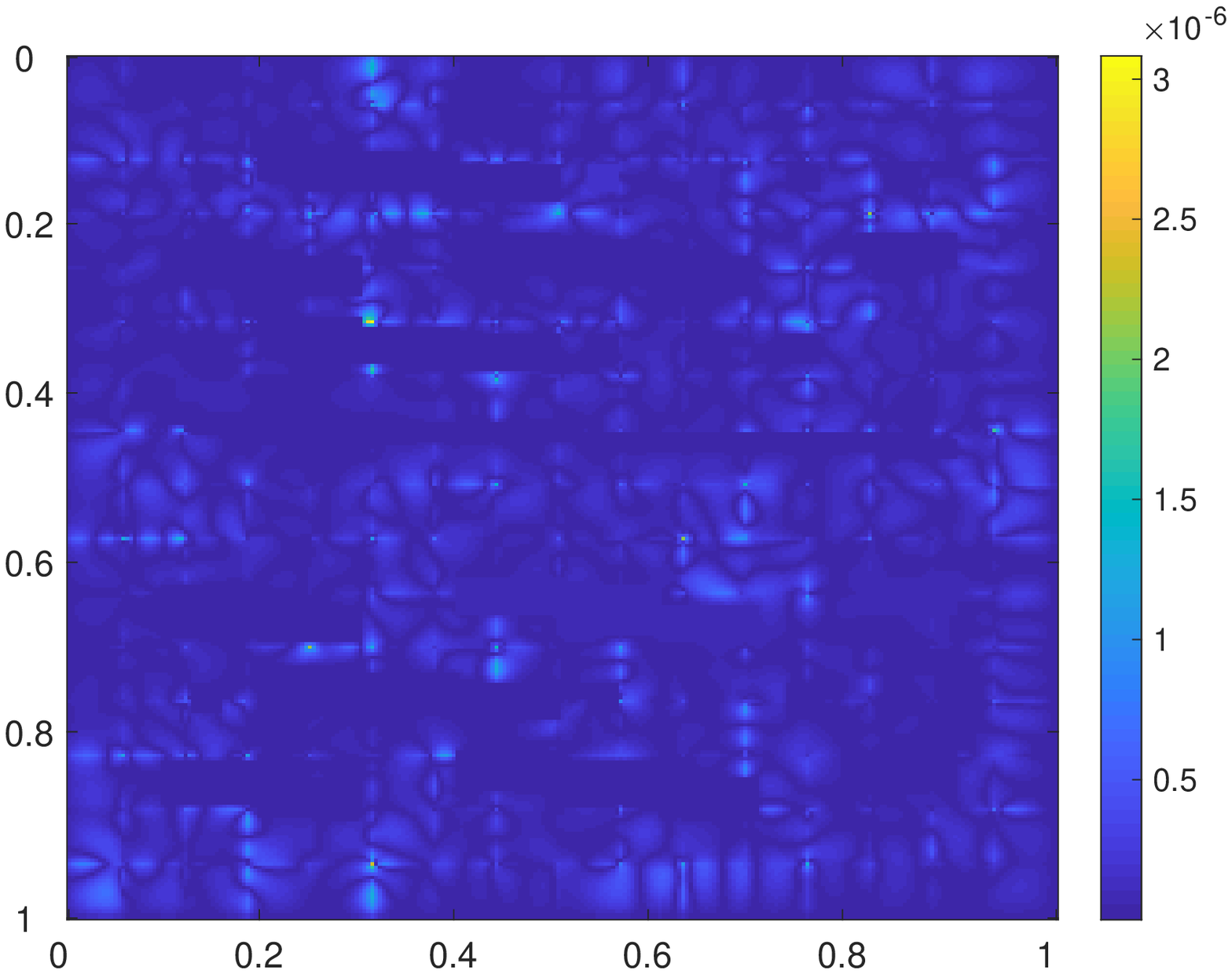}
\caption{Difference $\abs{u-u_{ms}^m}$.}
\label{fig:sol_pro_3}
\end{subfigure}
}
\caption{Solution profiles of Example \ref{subsec:ex2}. ($m=5$ using primal-dual combined)}
\label{fig:sol_pro}
\end{figure}

\subsubsection{Comparison in dual online enrichments}\label{subsubsec:cdoe}
Next, we test the different online enrichments involving the dual problem 
with different settings of the parameters for adaptivity. 
The corresponding error reduction in the goal 
functional for each are shown in Figure \ref{fig:goal_diffon}. 
As seen in Figures \ref{fig:goal_diffon_1} with $\theta = 1$ and 
$\gamma = \beta = \tau = 0.8$; \ref{fig:goal_diffon_2} with 
$\theta = \gamma = \beta = \tau = 0.8$;
\ref{fig:goal_diffon_3} with $\theta = \gamma = \beta = \tau = 0.5$;
and \ref{fig:goal_diffon_4} with $\theta = \gamma = \beta = \tau = 0.3$, 
the standard and primal-dual combined approaches in Sections 
\ref{subsec:std-enrich}-\ref{subsec:pd-combine} yield the best performance 
with fewer DOF and higher accuracy in terms of goal-error. 
Overall, Figures \ref{fig:goal_diffon_1}- \ref{fig:goal_diffon_4} 
show the primal-dual combined and standard approaches to have comparable efficiency
on each of the problems, although each displays different curves of error reduction
suggesting different enrichment in each algorithm. 
The primal-dual product based enrichment in Section \ref{subsec:pdpb} based
on the error bound \eqref{eqn:reliable} as opposed to the online-error reduction
prediction \eqref{eqn:error-reduce} gives stable error reduction but with 
a slower convergence rate.

\begin{figure}[ht!]
\mbox{
\begin{subfigure}{0.48\textwidth}
\centering
\includegraphics[width=1.0\linewidth, height = 5.5cm]{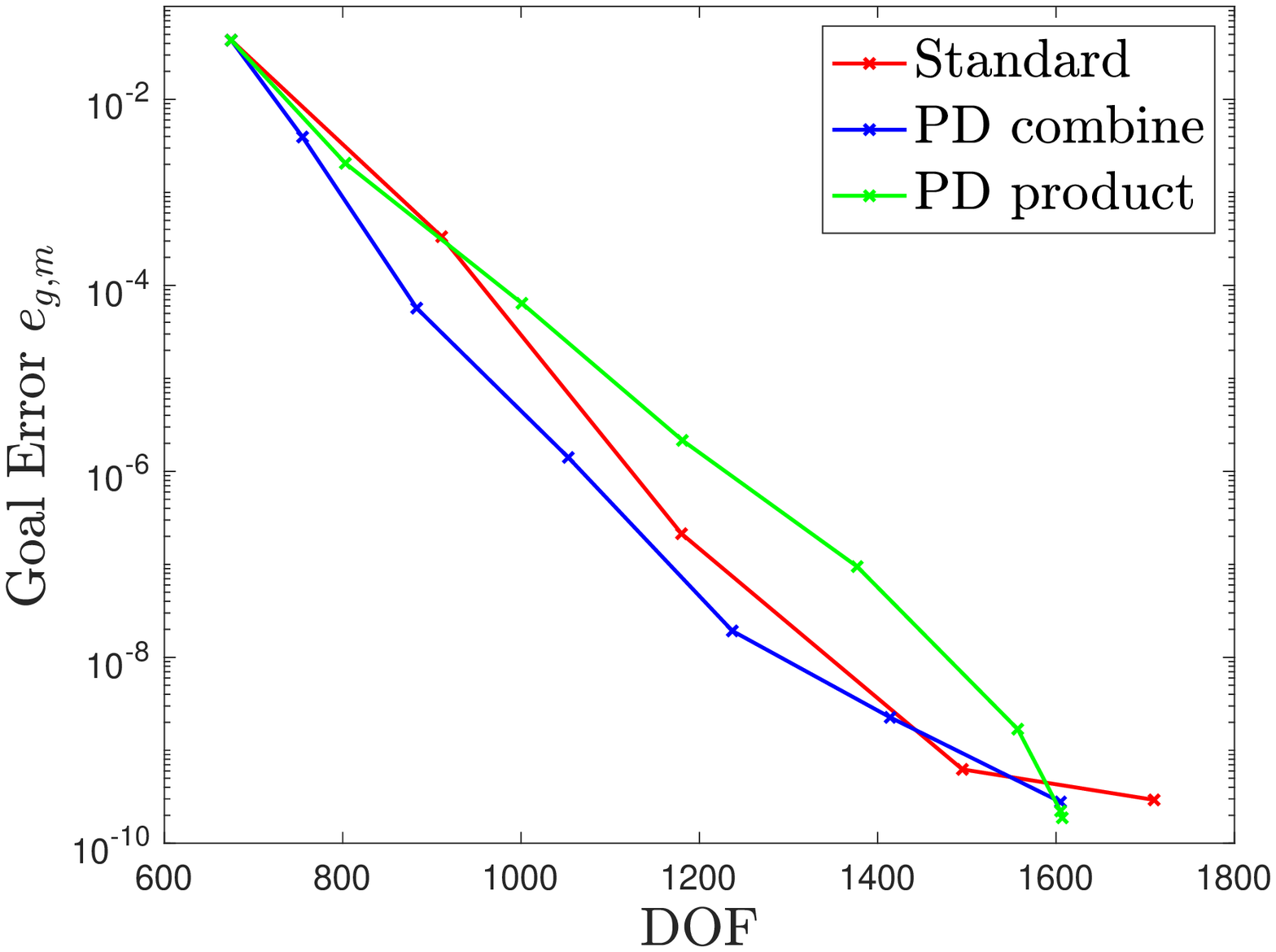}
\caption{$\theta = 1$, $\gamma = \beta = \tau = 0.8$.}
\label{fig:goal_diffon_1}
\end{subfigure}

\begin{subfigure}{0.48\textwidth}
\centering
\includegraphics[width=1.0\linewidth, height = 5.5cm]{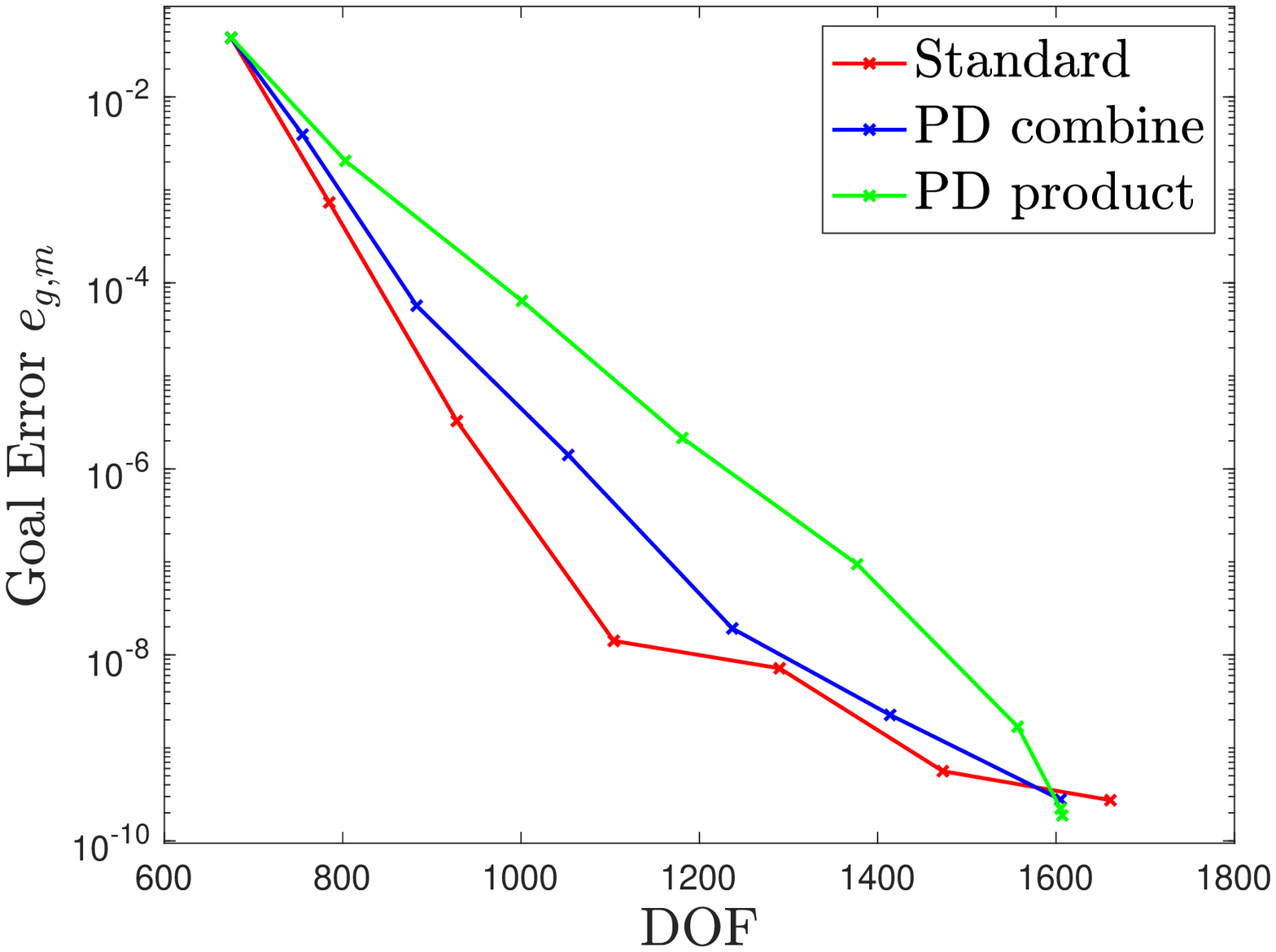}
\caption{$\theta = \gamma = \beta = \tau = 0.8$.}
\label{fig:goal_diffon_2}
\end{subfigure}
}
\mbox{
\begin{subfigure}{0.48\textwidth}
\centering
\includegraphics[width=1.0\linewidth, height = 5.5cm]{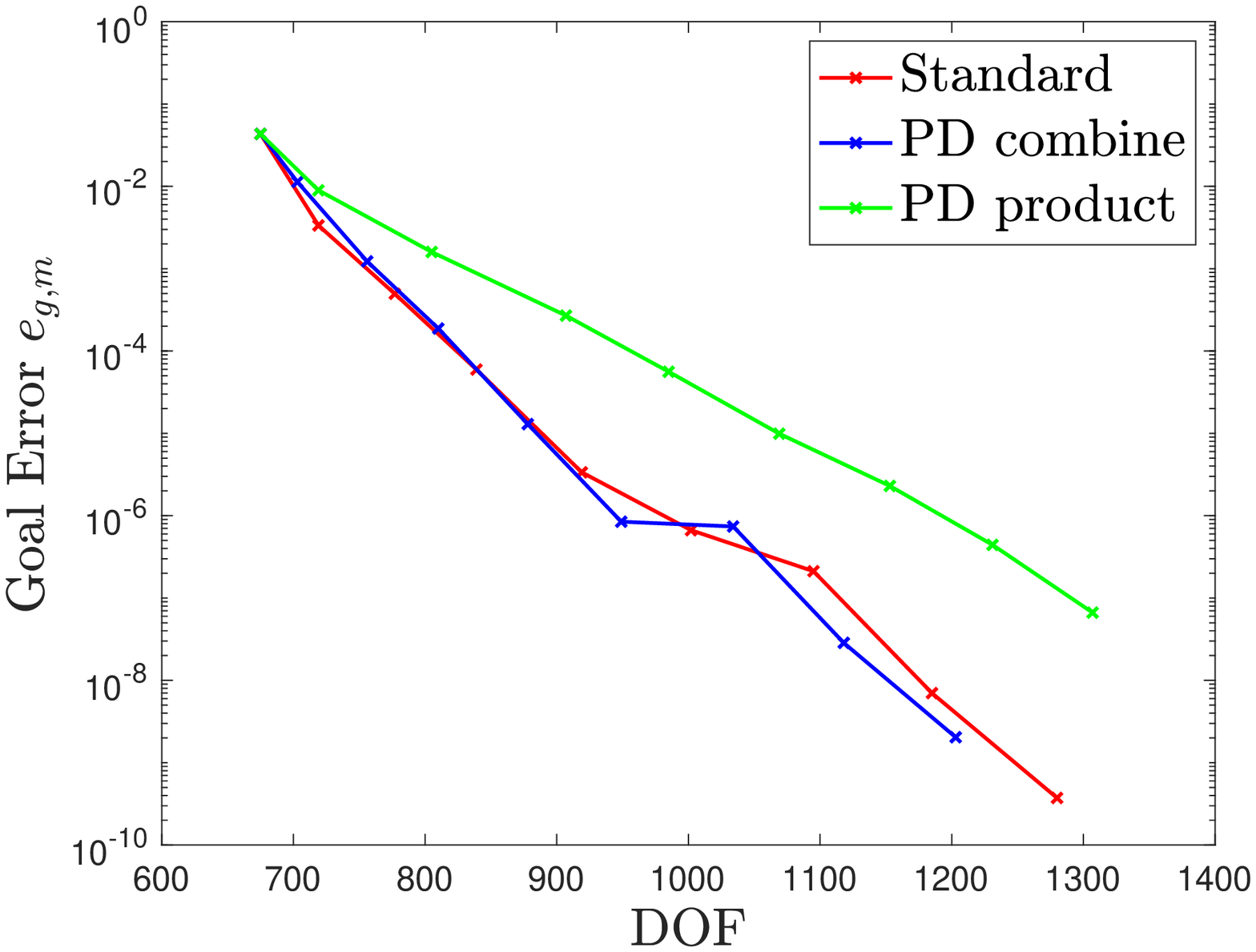}
\caption{$\theta = \gamma = \beta = \tau = 0.5$.}
\label{fig:goal_diffon_3}
\end{subfigure}

\begin{subfigure}{0.48\textwidth}
\centering
\includegraphics[width=1.0\linewidth, height = 5.5cm]{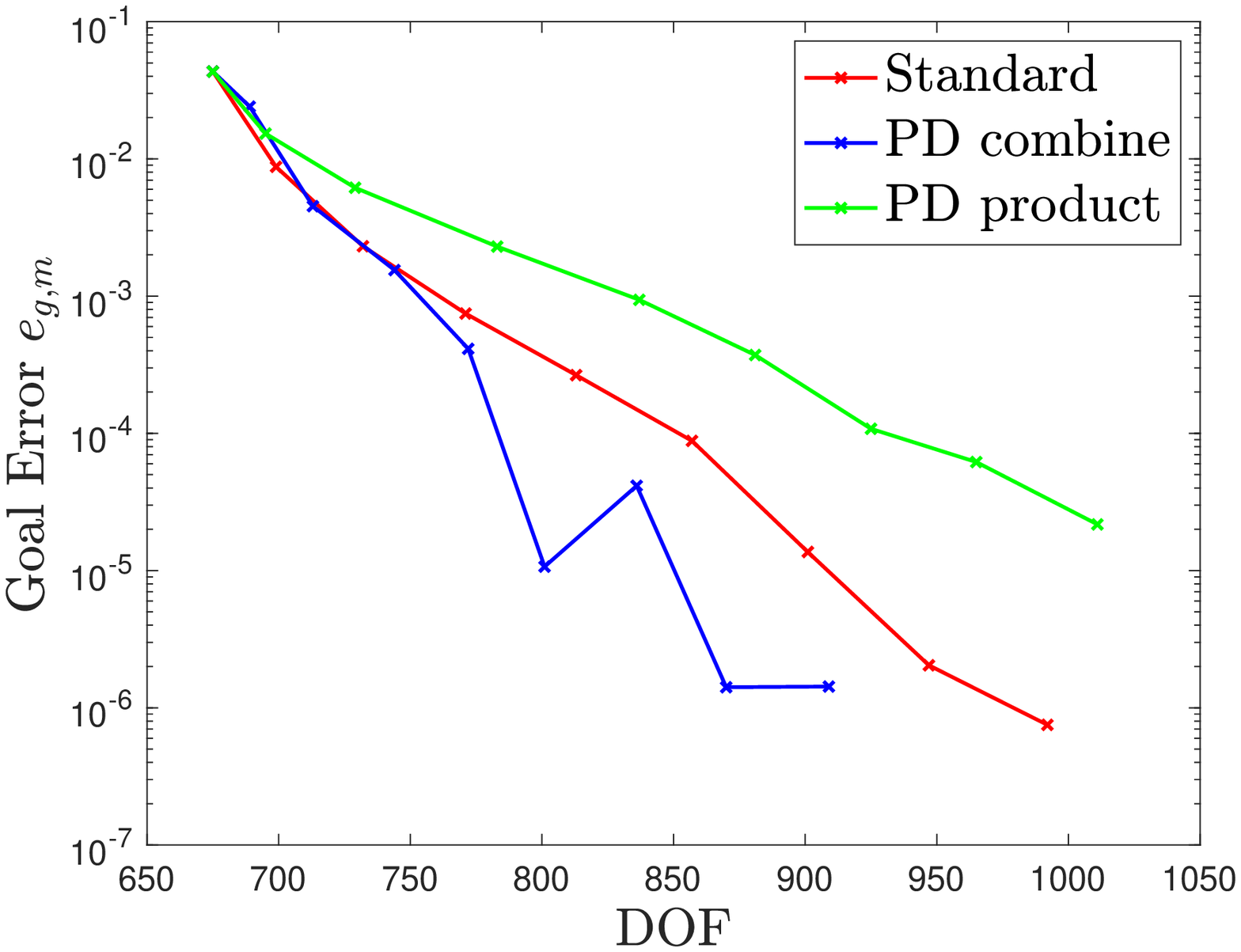}
\caption{$\theta = \gamma = \beta = \tau = 0.3$.}
\label{fig:goal_diffon_4}
\end{subfigure}
}
\caption{Comparing different online enrichments. ($l_i = 3$)}
\label{fig:goal_diffon}
\end{figure}
Furthermore, we test the performance of different online enrichments with another specific goal functional, whose effective region is near the middle channel of the permeability field (cf. Figure \ref{fig:kappa}, red square). The goal functional $g$ is now given by
$$ g(v) := \int_{\widehat K} v(x) ~dx, \quad \widehat K := [1/2, 9/16] \times [7/16,1/2].$$
We keep the source function $f$ and the permeability field $\kappa$ unchanged. Figure \ref{fig:goal_diffon_ag} records the results of $e_{g,m}$ obtained by using different online enrichments with varying setting of adaptive parameters. Again one may observe that both the standard and primal-dual combined enrichments give a faster convergence rate than does the primal-dual product based approach, especially when the parameters getting small (cf. Figures \ref{fig:goal_diffon_3_ag} and \ref{fig:goal_diffon_4_ag}). 

\begin{remark}
The results in this example also indicate that
the bound \eqref{eqn:motivate1} which motivates both standard and combined 
enrichment strategies provide a better indication of the role of online
basis functions in goal-error reduction than does \eqref{eqn:reliable} which shows the
reliability of the product-based estimator.  
Similarly to how the primal-only strategy can work, but is less effective than the 
primal-dual strategies; the indicator shown effective for primal-dual offline enrichment 
also can work, but is also less effective than the online-specific primal-dual 
strategies.
\end{remark}

\begin{figure}[ht!]
\mbox{
\begin{subfigure}{0.48\textwidth}
\centering
\includegraphics[width=1.0\linewidth, height = 5.5cm]{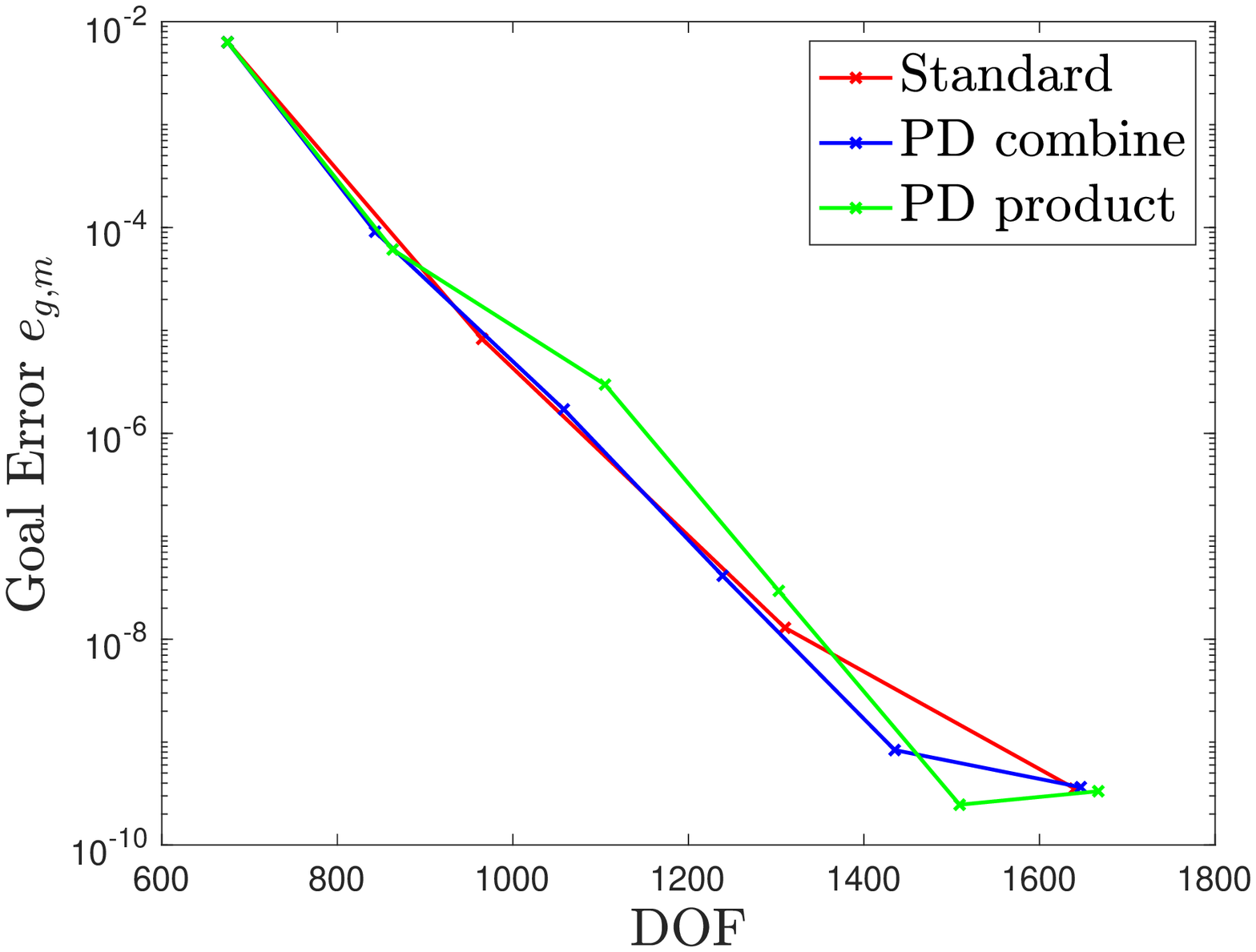}
\caption{$\theta = 1$, $\gamma = \beta = \tau = 0.8$.}
\label{fig:goal_diffon_1_ag}
\end{subfigure}

\begin{subfigure}{0.48\textwidth}
\centering
\includegraphics[width=1.0\linewidth, height = 5.5cm]{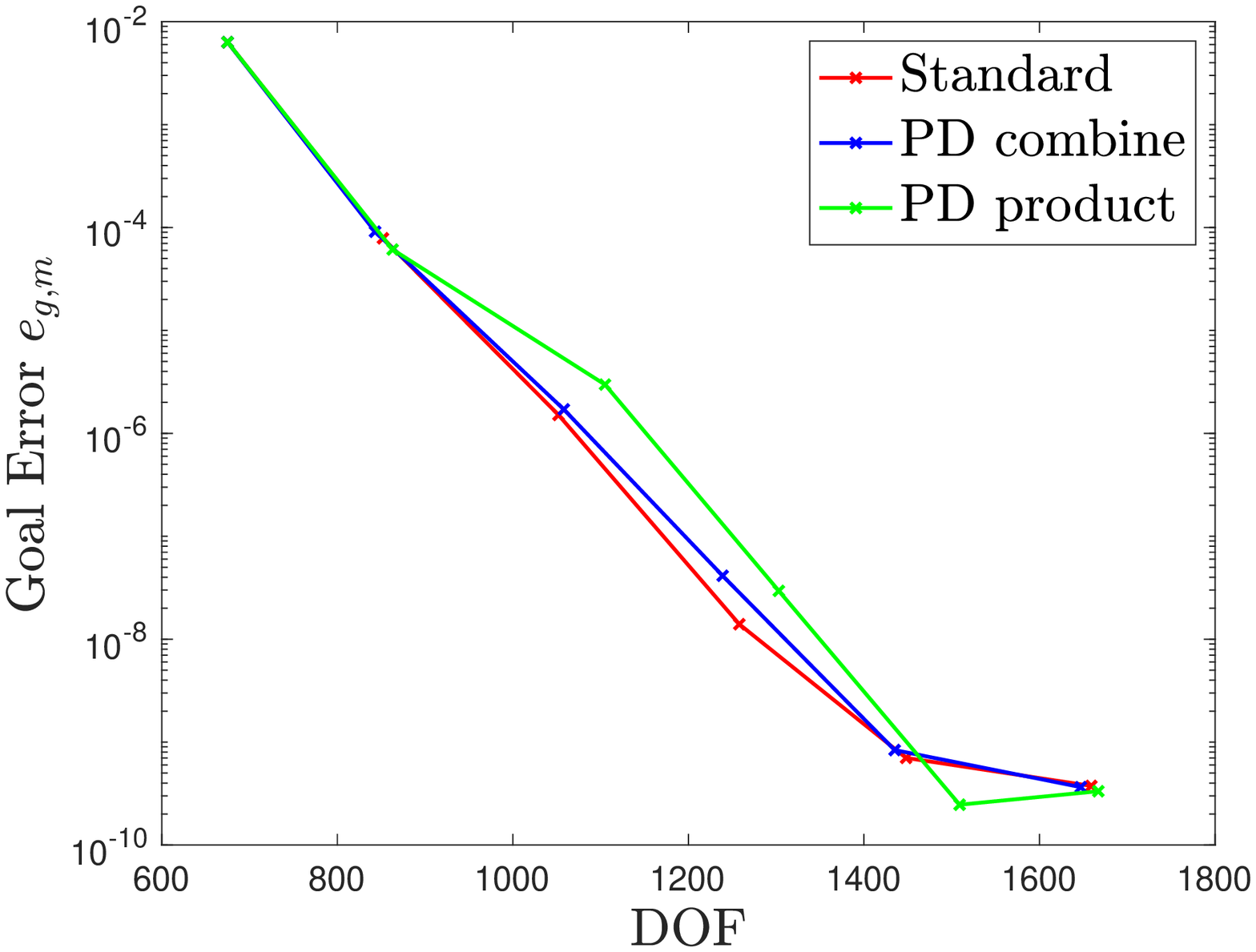}
\caption{$\theta = \gamma = \beta = \tau = 0.8$.}
\label{fig:goal_diffon_2_ag}
\end{subfigure}
}
\mbox{
\begin{subfigure}{0.48\textwidth}
\centering
\includegraphics[width=1.0\linewidth, height = 5.5cm]{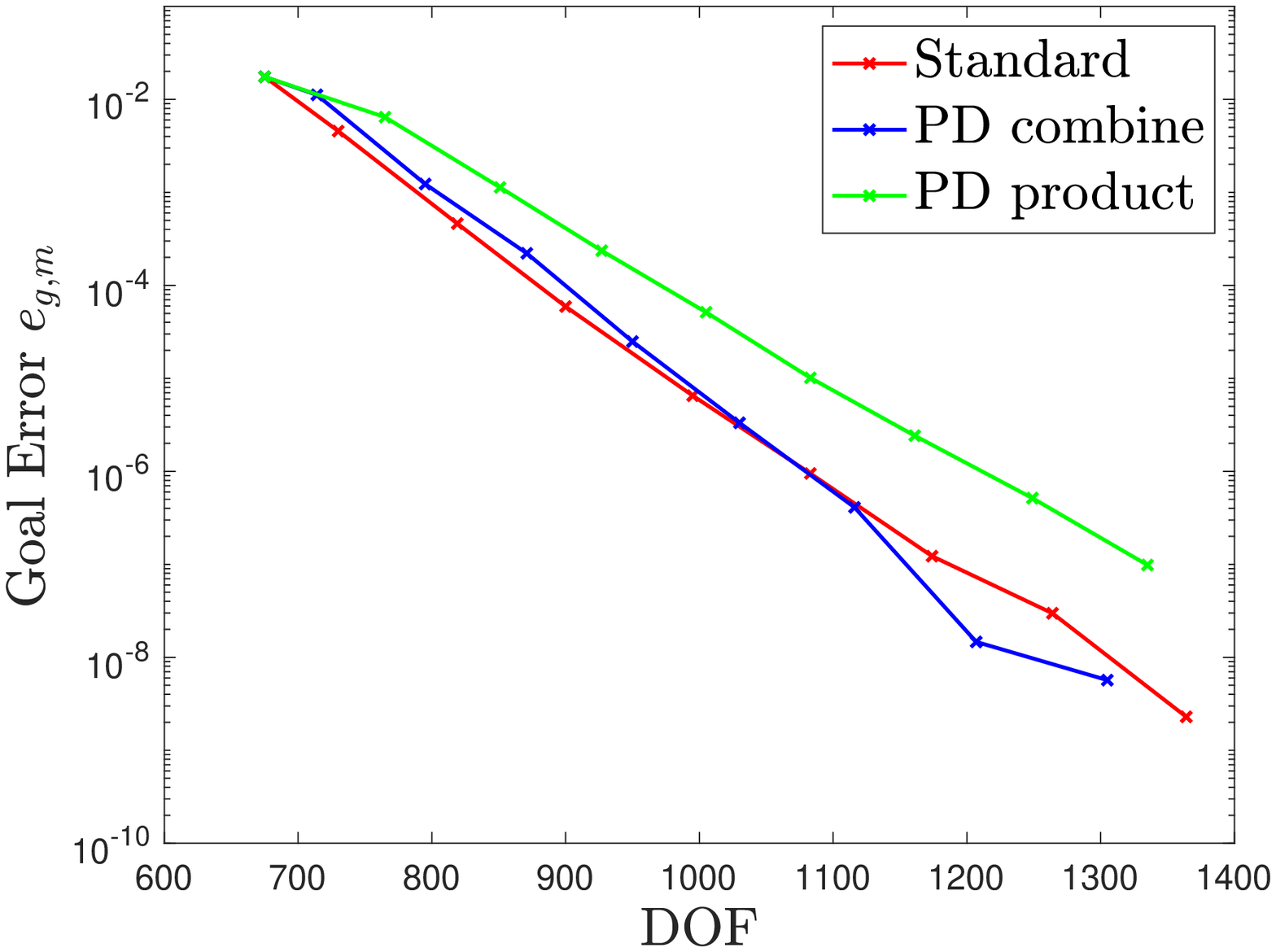}
\caption{$\theta = \gamma = \beta = \tau = 0.5$.}
\label{fig:goal_diffon_3_ag}
\end{subfigure}

\begin{subfigure}{0.48\textwidth}
\centering
\includegraphics[width=1.0\linewidth, height = 5.5cm]{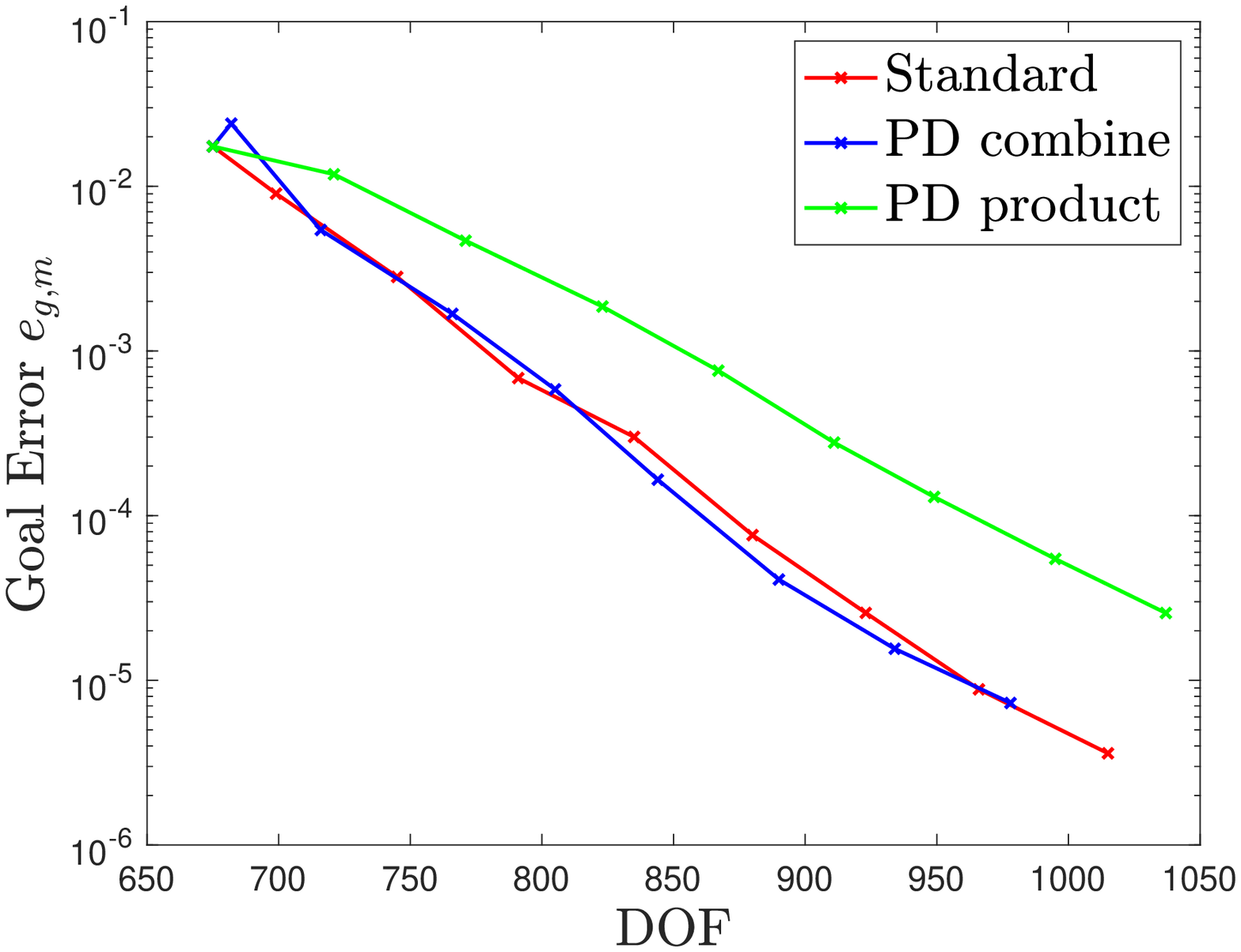}
\caption{$\theta = \gamma = \beta = \tau = 0.3$.}
\label{fig:goal_diffon_4_ag}
\end{subfigure}
}
\caption{Comparing different online enrichments with another $g$. ($l_i = 3$)}
\label{fig:goal_diffon_ag}
\end{figure}

\subsection{Example 3: Discussion of ONERP}\label{subsec:ex3}
In the section, we discuss how ONERP effects the performance of the dual online 
algorithms for a given goal functional. In this example, we keep the source function unchanged and use a different permeability field $\kappa$. Set the goal functional $g: V \to \mathbb{R}$ to be 
\begin{eqnarray}
	g(v) := - \int_{\widetilde K} v(x) \ dx, \quad \widetilde K = [3/8, 7/16] \times [3/4, 13/16].
\end{eqnarray}
See Figure \ref{fig:setting_3} for the visualizations of the indicator function of 
$\widetilde K$ and the permeability field $\kappa$. We test the cases of different contrast values over the 
channels (i.e. the yellow region in Figure \ref{fig:kappa_c}). 
In particular, we increase the contrast by a factor of 100 to see if there are 
changes in the convergence behavior. In the high-contrast case, the 
first few eigenvalues related to the channel regions become 100 times smaller 
\cite{efendiev2011domain}, meaning an increased number of offline basis functions
are necessary for the error bound \eqref{eqn:error-reduce} to assure rapid convergence.
Our numerical results illustrate this requirement, as convergence of the error 
is seen with only a single basis function per neighborhood in the lower-contrast case
but not in the higher-contrast case.

Next, we present the error reduction in the primal-dual combined enrichment 
resulting from different numbers of initial basis functions in the offline space.
Here, the parameter is $\beta = 0.6$. The results are in Figure \ref{fig:onerp_pdcb}. 
In the lower contrast case, the 
smallest eigenvalue whose eigenvector is not included in
the offline space
is 47.5389 when $l_i = 1$. However, in the high contrast case, the corresponding 
eigenvalue is only 0.4759, meaning
the ONERP is not satisfied.
As shown in Figure \ref{fig:onerp_1e6}, when only 1 initial basis function is used in 
each coarse neighborhood, the goal-error decay becomes slower compared to the lower 
contrast case, 
and indeed convergence is not observed.
The goal-error in this case $e_{g,m}$ stalls at the level around $10^{-4}$. 
However, when sufficiently many (in this case, two or three)
initial (local) basis functions are 
included in the offline space $V_{\text{off}}$, then the 
rate of error decay is independent of the contrast of the permeability field.

We remark that for the lower contrast case 
the goal-error is still reduced (with the least stability and more iterations) 
to below $10^{-8}$ when only a single offline basis function used in each coarse 
neighborhood (see red curve in Figure \ref{fig:onerp_1e4}).

\begin{figure}[!ht]
\mbox{
\begin{subfigure}{0.48\textwidth}
\centering
\includegraphics[width=0.8\linewidth, height = 5.0cm]{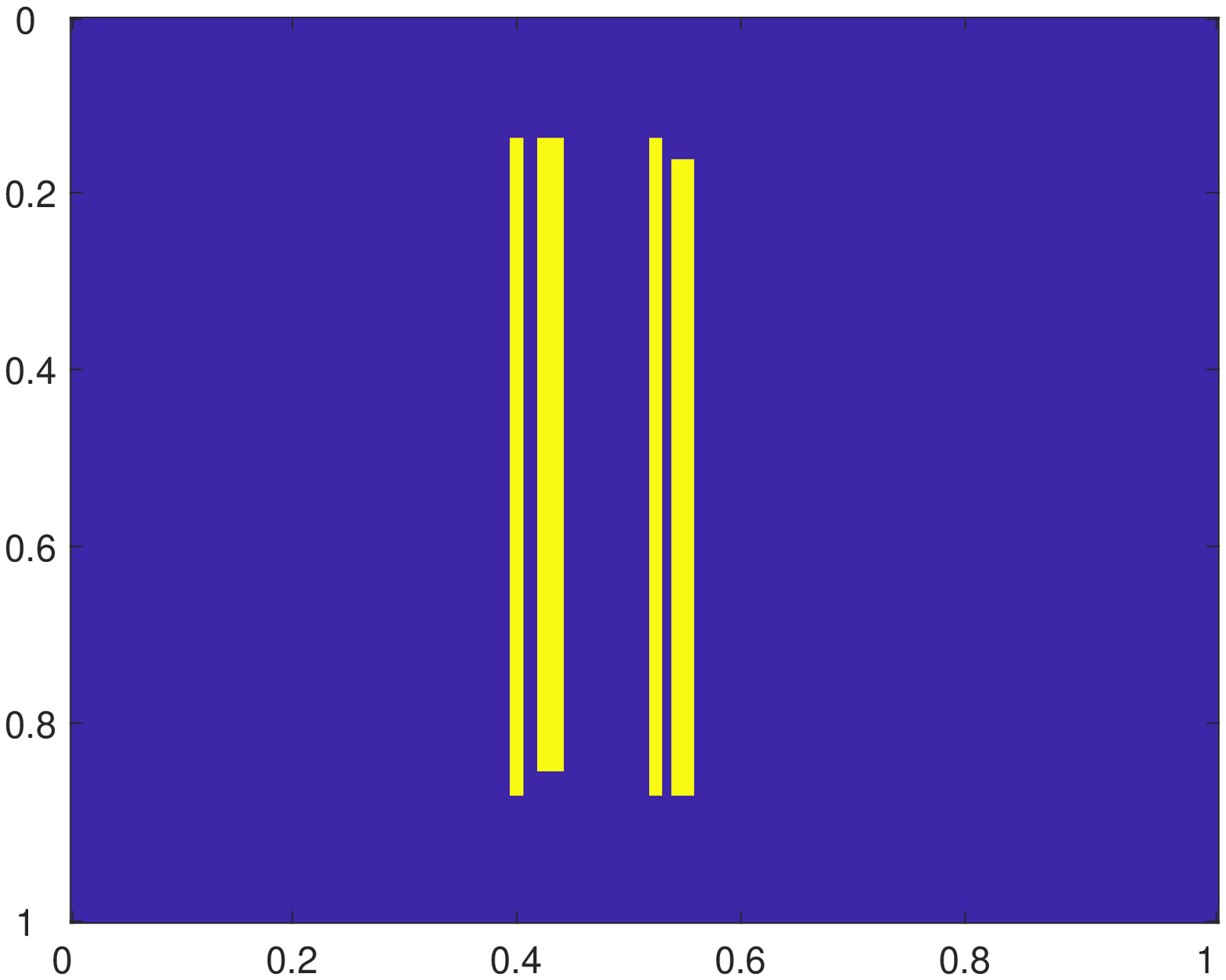}
\caption{Permeability field $\kappa$.}
\label{fig:kappa_c}
\end{subfigure}
\begin{subfigure}{0.48\textwidth}
\centering
\includegraphics[width=0.8 \linewidth, height=5.0cm]{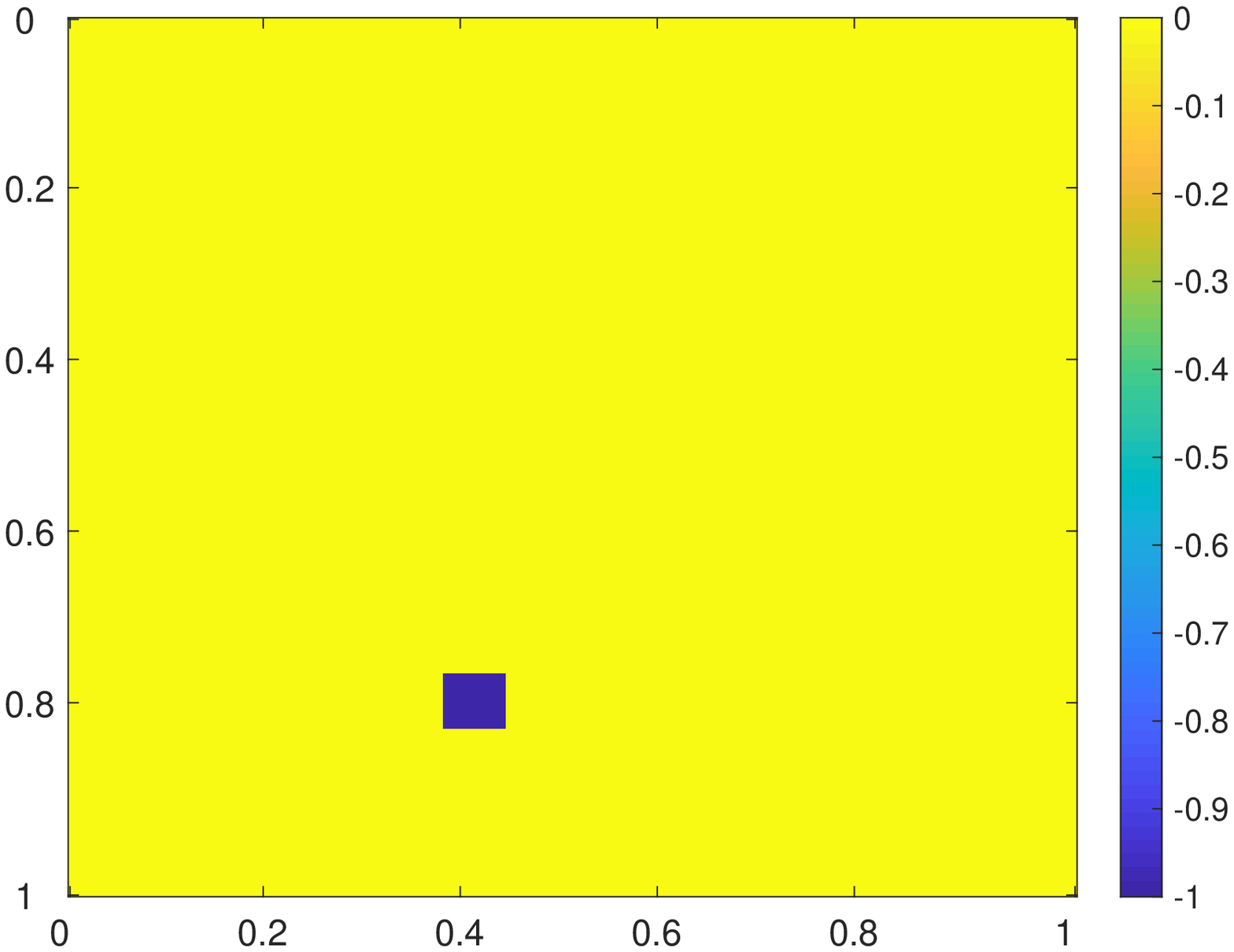}
\caption{Function $\mathbf{1}_{\widetilde K}$.}
\label{fig:goal_fun_3}
\end{subfigure}
}
\caption{Numerical setting of Example \ref{subsec:ex3}.}
\label{fig:setting_3}
\end{figure}

\begin{figure}[!ht]
\mbox{
\begin{subfigure}{0.48\textwidth}
\centering
\includegraphics[width=1.0\linewidth, height = 5.5cm]{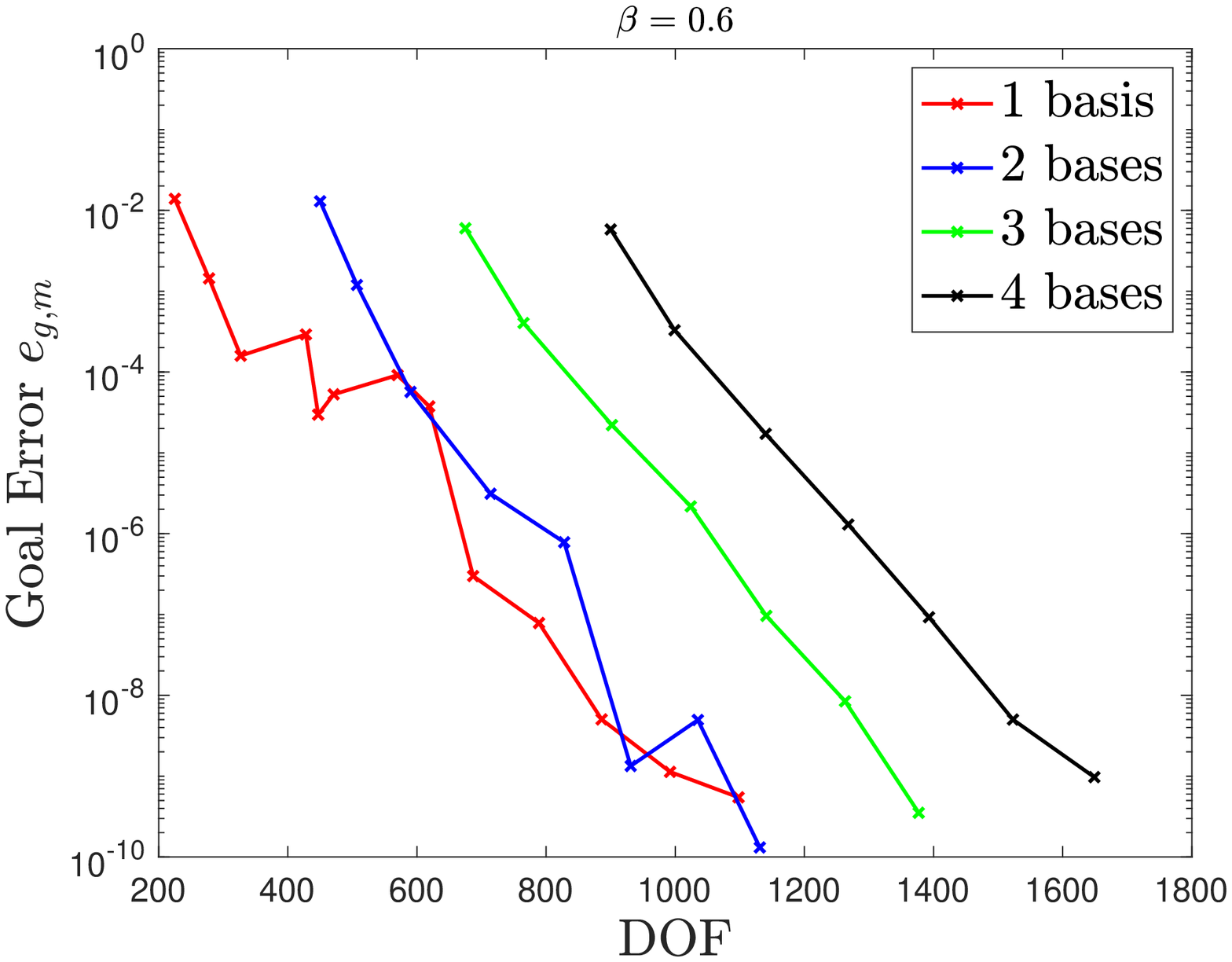}
\caption{Contrast: $1e4$.}
\label{fig:onerp_1e4}
\end{subfigure}
\begin{subfigure}{0.48\textwidth}
\centering
\includegraphics[width=1.0\linewidth, height=5.5cm]{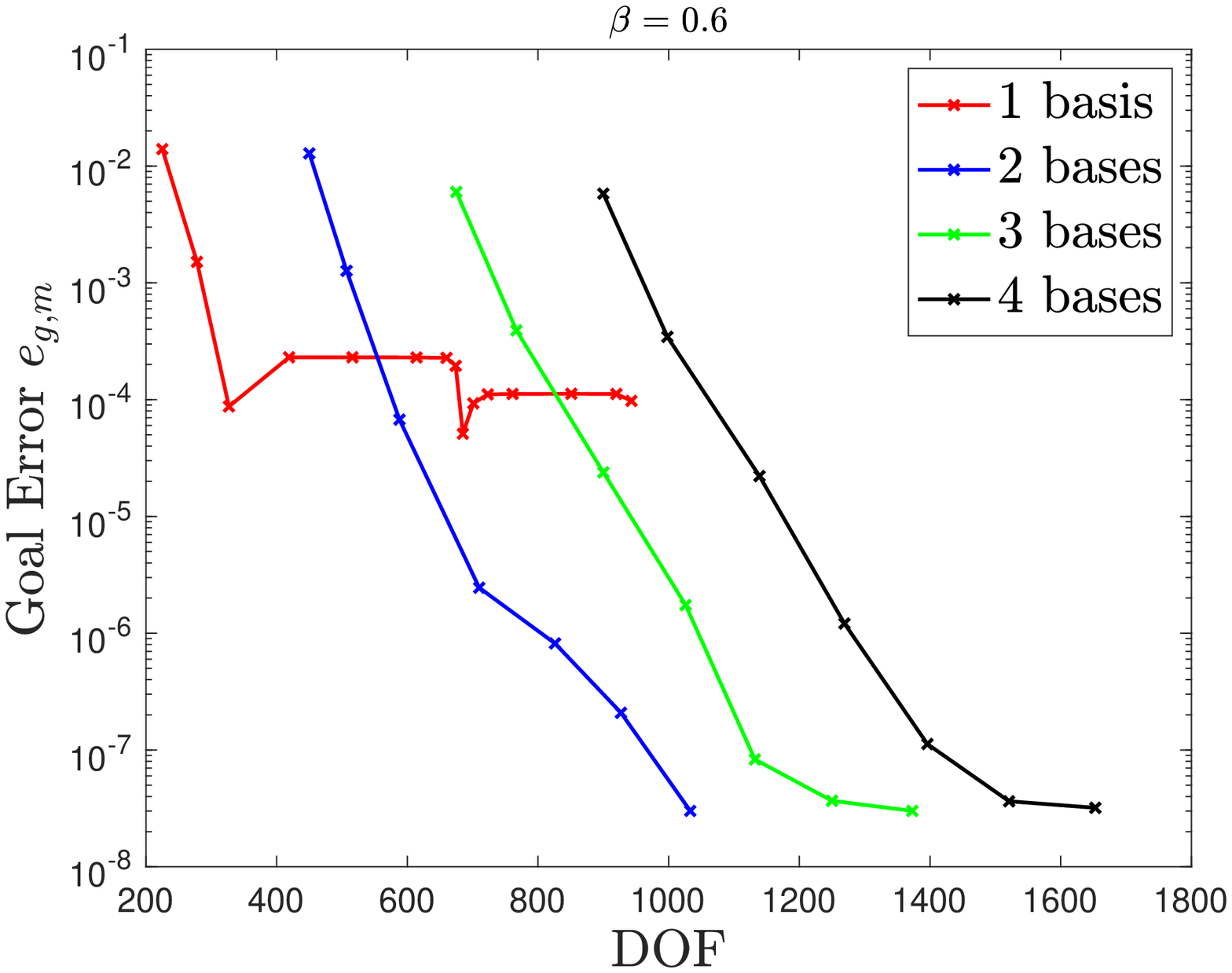}
\caption{Contrast: $1e6$.}
\label{fig:onerp_1e6}
\end{subfigure}
}
\caption{Goal-error reduction using primal-dual combined approach.}
\label{fig:onerp_pdcb}
\end{figure}

\section{Conclusion}\label{sec:con}
In this research, we propose a GMsFEM based goal-oriented online adaptivity framework for approximating quantities of interest for flow in heterogeneous media. 
The main idea of the method involves constructing 
both primal and dual online basis functions by solving local problems related to the 
local residuals. 
Each primal (respectively dual) online basis function is computed by solving a local 
problem for the Riesz representative of the current primal (respectively dual) residual
in each coarse neighborhood. 
After the online basis functions are constructed, they are used to enrich the multiscale
space in the next level of the adaptive algorithm to improve the accuracy in a 
low-dimensional approximation of the quantity of interest.
The convergence analysis of the method shows a guaranteed rate of error reduction so
long as sufficiently many offline basis functions are used to form the initial 
multiscale space.

The numerical results support the analysis and demonstrate the necessity of the dual 
basis functions for efficient error reduction in the quantity of interest. 
Three online enrichment strategies are proposed to adaptively select which regions 
are supplemented with the online basis functions. While the primal residual based
approach 
is seen to provide a slower and less stable rate of error reduction, particularly
for lower values of the adaptivity parameters,
the online-specific primal-dual approaches each 
succeed in achieving steady and more efficient rates. 
A comparison between different dual strategies is made and the 
standard approach and primal-dual combined strategies are 
seen to be the most stable and efficient over different settings of the 
adaptivity parameters. 
With sufficiently many basis functions included in the initial
offline space, a steady rate of error 
reduction is observed in the primal-dual standard and combined strategies 
independent of the contrast in the permeability field. 

\subsection*{Acknowledgement}
EC's work is partially supported by Hong Kong RGC General Research Fund (Project 14304217) and CUHK Direct Grant for Research 2017-18. SP was supported in part by NSF DMS 1719849
and NSF DMS 1852876.

\bibliographystyle{abbrv}
\bibliography{references}
\end{document}